\documentclass[a4paper,10pt]{article}
\usepackage[T1]{fontenc}
\usepackage[utf8]{inputenc}
\usepackage{tablefootnote}
\usepackage{latexsym,amssymb}
\usepackage{biblatex}
\bibliography{ref} 
\usepackage{csquotes}
\usepackage{graphicx}
\usepackage{hyperref}
\usepackage{amsmath}
\usepackage[english]{babel}
\usepackage{tikz-cd}
\usepackage{scalerel,stackengine}
\usepackage{bm}
\usepackage{url}
\usepackage{amsmath}
\usepackage{amssymb}
\usepackage{amsthm}
\usepackage{mathtools}
\usepackage{amsmath,calligra,mathrsfs}
\usepackage{yhmath}
\usepackage{bashful}
\usepackage{hyperref}
\newtheorem{theorem}{Theorem}
\newtheorem{lemma}[theorem]{Lemma}
\newtheorem{proposition}[theorem]{Proposition}
\newtheorem{corollary}[theorem]{Corollary}

\theoremstyle{definition} 

\newtheorem{example}[theorem]{Example}
\newtheorem{remark}[theorem]{Remark}
\newtheorem{conjecture}[theorem]{Conjecture}

\newtheorem{note}[theorem]{Note}

\makeatletter
\author{Diana Mocanu}
\newcommand*{\inlineequation}[2][]{%
	\begingroup
	\refstepcounter{equation}%
	\ifx\\#1\\%
	\else
	\label{#1}%
	\fi
	\relpenalty=10000 %
	\binoppenalty=10000 %
	\ensuremath{%
		#2%
	}%
	~\@eqnnum
	\endgroup
}
\makeatother

\title{ \bf{Asymptotic Fermat for signatures \texorpdfstring{$(r,r,p)$}{TEXT}  using the modular approach}
	\noindent}
\newcommand{\rep}{\overline{\rho}}
\newcommand{\fQ}{\mathfrak{Q}}

\newcommand{\fP}{\mathfrak{P}}
\newcommand{\fp}{\mathfrak{p}}

\newcommand{\cN}{\mathcal{N}}
\newcommand{\cO}{\mathcal{O}}
\newcommand{\N}{\text{Norm}}
\newcommand{\z}{\zeta_r}
\newcommand{\p}{ {\mathfrak{P}}}
\newcommand{\Ker}{\text{Ker}}
\author{Diana Mocanu}
\date{}
\begin{document}
	
	\maketitle
	\begin{abstract}
		Let $K$ be a totally real field, and $r\geq 5$ a fixed rational prime. In this paper, we use the modular method as presented in the recent work of Freitas and Siksek to study non-trivial, primitive solutions $(x,y,z) \in \mathcal{O}_K^3$ of the signature $(r,r,p)$ equation $x^r+y^r=z^p$ (where $p$ is a prime that varies). 
		An adaptation of the modular method is needed, and we follow the recent work of Freitas which constructs Frey curves over totally real subfields of $K(\z)$. 
		
	When $K=\mathbb{Q}$ we get that there are no non-trivial, primitive integer solutions $(x,y,z)$ with $2|z$ for signatures $(r,r,p)$ when $r \in \{5,7,11,13,19,23$, $37,47,53,59,61,67,71,79,83,101,103,107,131,139,149\}$
and $p$ is sufficiently large. Similar results hold for quadratic fields, for example when $K=\mathbb{Q}(\sqrt{2})$ there are no non-trivial, primitive solutions $(x,y,z)\in \cO_K^3$ with $\sqrt{2}|z$ for signatures
    $(5,5,p),(7,7,p)$, $(11,11,p),(13,13,p)
    $
   and sufficiently large $p$.  
	\end{abstract}    
\section{Introduction}
The \textit{modular method} for solving Diophantine equations was pioneered by Frey, Serre, Ribet, and Wiles with the famous proof of Fermat's Last Theorem in 1995 \cite{AW95}, \cite{Taylor}. Since then, various generalizations of Fermat’s Last Theorem have been
considered, that are of the shape
\begin{equation} \label{intro}
	Ax^p+By^q=Cz^r \text{ with } 1/p+1/q+1/r<1
\end{equation}
for fixed integers $A,B$ and $C$. We call $(p,q,r)$ \textit{the signature} of the equation \eqref{intro}. A \textit{primitive} solution $(x,y,z)$ is a solution where $x,y$ and $z$ are pairwise coprime and a \textit{non-trivial} solution $(x,y,z)$ is a solution where $xyz \neq 0$. The so-called \textit{Generalized Fermat equation} \eqref{intro} is the subject to the following conjecture, which is known to be a consequence of the $ABC$-conjecture.

\begin{conjecture} \label{catalan}
	Fix $A,B,C\in \mathbb{Z}$ pairwise coprime. Then, there exist  only finitely many non-trivial, primitive
	triples  with $(x,y,z)\in \mathbb{Z}^3$ and $p,q,r$ primes such that \eqref{intro} holds (here solutions where
	one of $x, y, z$ equals $1$ are counted only once, e.g. $2^3 +1^q = 3^2$
	for all $q$ counts as one solution.)
\end{conjecture}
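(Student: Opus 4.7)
The plan is to deduce the conjecture from the $ABC$-conjecture, following the standard Darmon--Granville--Mauldin strategy. Given a primitive, non-trivial solution $(x,y,z)\in\mathbb{Z}^3$ to \eqref{intro} with prime signature $(p,q,r)$, I would set $a=Ax^p$, $b=By^q$, $c=-Cz^r$, so that $a+b+c=0$. Because $A,B,C$ are pairwise coprime and $x,y,z$ are pairwise coprime, a prime-by-prime analysis shows that $d=\gcd(a,b,c)$ divides $ABC$, hence is bounded by a constant depending only on $A,B,C$; dividing through by $d$ puts us in the coprime setting that the $ABC$-conjecture requires.

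Next, I would invoke $ABC$: for any $\epsilon>0$ there exists $K_\epsilon$ such that, outside a finite exceptional set, the reduced triple $(a/d,b/d,c/d)$ satisfies $\max(|a|,|b|,|c|)/d\leq K_\epsilon\cdot\mathrm{rad}(abc)^{1+\epsilon}$. Writing $M=\max(|Ax^p|,|By^q|,|Cz^r|)$ and using $|x|\leq(M/|A|)^{1/p}$, $|y|\leq(M/|B|)^{1/q}$, $|z|\leq(M/|C|)^{1/r}$ together with the estimate $\mathrm{rad}(abc)\leq|ABC|\cdot|xyz|$, one obtains an inequality of the shape
\[
M \;\ll_{\epsilon,A,B,C}\; M^{(1/p+1/q+1/r)(1+\epsilon)}.
\]
The hyperbolicity hypothesis $1/p+1/q+1/r<1$, combined with the classical fact that the maximum of this sum over prime triples satisfying that inequality is $\tfrac{41}{42}$, attained at $(2,3,7)$, yields a uniform lower bound of $1/42$ for the gap $1-(1/p+1/q+1/r)$. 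Choosing $\epsilon<1/41$ makes the exponent on the right strictly less than $1$ uniformly in $(p,q,r)$, forcing $M$ to be bounded by a constant $C(A,B,C)$.

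Finally, I would extract the stated finiteness from this uniform bound. If $|x|,|y|,|z|\geq 2$, the inequality $2^p\leq M/|A|$ forces $p$ bounded, and similarly for $q$ and $r$, so only finitely many sextuples $(x,y,z,p,q,r)$ arise in that range. Solutions in which one of $|x|,|y|,|z|$ equals $1$ are precisely those the statement permits us to count only once, which then concludes the argument.

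The main obstacle I anticipate is the combined coprimality and radical bookkeeping: one must verify carefully that $d\mid ABC$ really holds in every prime-by-prime case, including those where a prime divides both $A$ and a high power of $y$, and check that the bound $\mathrm{rad}(abc)\leq|ABC|\cdot|xyz|$ survives the reduction to the coprime triple with only a controlled constant. Once that is in place, the remainder is a clean application of the $ABC$-conjecture combined with the hyperbolicity gap and an elementary pigeonhole on the exponents.
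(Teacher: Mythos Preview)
The paper does not prove this statement: it is recorded as a \emph{conjecture}, with only the parenthetical remark that it ``is known to be a consequence of the $ABC$-conjecture.'' There is therefore no proof in the paper to compare against. What you have written is precisely a derivation of the conjecture from $ABC$, i.e.\ a justification of that remark rather than an unconditional proof. As such it should be read as a conditional argument, and it is essentially correct.

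Two small comments on the details. First, your worry about the gcd is unnecessary: under the hypotheses that $A,B,C$ are pairwise coprime and $x,y,z$ are pairwise coprime, one has $\gcd(Ax^p,By^q,Cz^r)=1$ outright. Indeed, if a prime $\ell$ divided all three, then $\ell$ cannot divide any of $x,y,z$ (if $\ell\mid x$ then $\ell\nmid y,z$, forcing $\ell\mid B$ and $\ell\mid C$, contradicting $\gcd(B,C)=1$; symmetrically for $y,z$), hence $\ell\mid A,B,C$, again impossible. So no reduction by $d$ is needed and the radical estimate $\mathrm{rad}(abc)\le |ABC|\cdot|xyz|$ applies directly. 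Second, your numerics are right: the supremum of $1/p+1/q+1/r$ over prime triples with sum $<1$ is $41/42$, attained at $(2,3,7)$, so any $\epsilon<1/41$ makes the exponent on the right of your displayed inequality strictly less than $1$ uniformly in the signature, and the finiteness (with the stated counting convention when one of $|x|,|y|,|z|$ equals $1$) follows.
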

A partial result towards this conjecture due to Darmon-Granville \cite{DG} asserts the following.
\begin{theorem}[Darmon-Granville]\label{dar}
	For $A, B, C$ fixed as
	above and a fixed signature $(p, q, r)$ there exists only
	finitely many non-trivial, primitive integer solutions to \eqref{intro}.
\end{theorem}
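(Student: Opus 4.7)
The plan is to reduce the statement to Faltings' theorem on rational points of curves of general type. First I would interpret $AX^p+BY^q=CZ^r$ as defining a smooth stacky (orbifold) curve $\mathcal{C}$ in the weighted projective plane $\mathbb{P}(qr,pr,pq)$, with stacky points of orders $p,q,r$ at the three coordinate vertices. The assumption $1/p+1/q+1/r<1$ makes $\mathcal{C}$ hyperbolic: its orbifold Euler characteristic $2-(1-1/p)-(1-1/q)-(1-1/r)=1/p+1/q+1/r-1$ is negative.

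Next, I would exhibit a Galois cover $\pi\colon\widetilde{C}\to\mathcal{C}$ ramified exactly at the three stacky points with ramification indices $p,q,r$ (this cover ``absorbs'' the stacky structure into a genuine smooth cover). By Riemann--Hurwitz, $\widetilde{C}$ satisfies $2-2g(\widetilde{C})=\deg(\pi)\bigl(1/p+1/q+1/r-1\bigr)<0$, so $g(\widetilde{C})\geq 2$. The crucial arithmetic input is a Chevalley--Weil-type descent: a primitive integer solution $(x,y,z)$ yields a $\mathbb{Q}$-point of $\mathcal{C}$ whose fibre under $\pi$ is a torsor for the Galois group of $\pi$ that is unramified outside the finite set of primes dividing $ABCpqr$. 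Here \emph{primitivity} is essential, as it controls the common divisibilities of $x,y,z$ and prevents new ramification from appearing at good primes. By Hermite--Minkowski there are only finitely many such torsors, corresponding to finitely many twists $\widetilde{C}^{(\sigma)}$ defined over number fields $K_\sigma$ whose degrees are bounded in terms of $(p,q,r)$ and the prime divisors of $ABC$.

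Finally, each twist $\widetilde{C}^{(\sigma)}$ is a smooth curve of genus at least two over a number field, so Faltings' theorem asserts $\widetilde{C}^{(\sigma)}(K_\sigma)$ is finite. Since every primitive integer solution lifts to a $K_\sigma$-rational point on some $\widetilde{C}^{(\sigma)}$ indexed by the finite set of twists, we conclude that there are only finitely many non-trivial primitive integer solutions.

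The main obstacle is the Chevalley--Weil step: one must show cleanly that the cover $\pi$ is \'etale (when interpreted stack-theoretically) away from primes of bad reduction, and translate this into a uniform bound on the fields of definition of the twists. A related subtlety is handling the weighted projective/stacky setting, since the classical Chevalley--Weil theorem is stated for \'etale covers of schemes; passing to a suitable integral model of $\mathcal{C}$ and verifying that the $S$-integral-point condition coming from primitivity really factors through finitely many twists is the technical heart of the argument. Once this is established, the reduction to Faltings is formal.
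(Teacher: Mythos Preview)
The paper does not actually prove this theorem; it merely cites Darmon--Granville \cite{DG} and remarks in a single sentence that ``the proof relies on Faltings Theorem on the finiteness of rational points on curves of genus greater than $2$.'' Your proposal is a correct and faithful expansion of exactly that remark: the reduction to Faltings via a ramified cover of the (stacky) curve attached to $Ax^p+By^q=Cz^r$, combined with a Chevalley--Weil descent controlled by primitivity, is precisely the Darmon--Granville strategy. So your approach and the paper's one-line indication agree.

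One small caution: the original Darmon--Granville paper does not use the stacky language explicitly (it predates the widespread use of that formalism), but works instead with explicit covers and the ``$M$-curves'' framework of Darmon; your reformulation in terms of orbifold curves and \'etale covers of stacks is the modern repackaging of the same idea. The substance --- Riemann--Hurwitz forces genus $\geq 2$ on the cover, primitivity bounds the ramification of the descent torsor, Hermite--Minkowski bounds the twists, and Faltings finishes --- is identical.
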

 The proof relies on Faltings Theorem on the finiteness of rational points on curves of genus greater than $2$. Theorem \ref{dar} can be easily generalized for equations with solutions in the ring of integers of any number field, and a suitable version of Conjecture \ref{catalan} is expected to hold.
 
Furthermore, Conjecture \ref{catalan} has been established for many families of signatures (both over $\mathbb{Q}$ and over totally real number fields) using variants of the modular approach.
Some examples of signatures that have been (partially) solved using the modular method can be found in Table \ref{my-table}.

\begin{table}
\begin{tabular}{|l|l|l|}
\hline
Signatures         & Over $\mathbb{Q}$       & Over totally real fields \tablefootnote{These are asymptotic results (i.e. for $n$ large enough). The upper script $^*$ stands for partial results where local conditions on the solutions are assumed.} \\ \hline
$(n,n,n), n\geq 3$ & Wiles \cite{AW95}, Taylor-Wiles \cite{Taylor} & Freitas-Siksek \cite{SN}              \\ \hline
$(n,n,2), n\geq 4$ & Darmon-Merel \cite{DM}, Poonen \cite{P}       & \begin{tabular}[c]{@{}l@{}}Işık, Kara, Özman*\cite{turkish}, \\ Mocanu*\cite{eu}\end{tabular}    \\ \hline
$(n,n,3), n\geq 3$ & Darmon-Merel \cite{DM}, Poonen \cite{P}       & Mocanu* \cite{eu}                          \\ \hline
$(4,2,n), n\geq 4$ & Ellenberg\cite{eisen}, Bennett-Ellenberg-Ng \cite{BEN}   & Torcomian* \cite{tarcomian}                   \\ \hline
\end{tabular}
\caption{Solved signatures using the modular method}
\label{my-table}
\end{table}
The modular method is a strategy of attacking Diophantine Equations which can be summarised in three steps.
\begin{itemize}
	\item[\textbf{Step 1.}] \textbf{Constructing a Frey elliptic curve.} Attach to a putative solution (of some Diophantine equation) an elliptic curve $E/K$, for $K$ an appropriately chosen totally real number field. We require $E$ to have the Artin conductor $\mathcal{N}_p$ bounded independently of the putative solution.
	\item[\textbf{Step 2.}]\textbf{Modularity/Level lowering.} Prove modularity of $E/K$ and irreducibility of some residual Galois representations $\bar{\rho}_{E,p}$ attached to $E$, to conclude (via level lowering results), that $\bar{\rho}_{E,p}$ corresponds to a (Hilbert) newform.
 \item[\textbf{Step 3.}]\textbf{Contradiction.} Prove that among the finitely many (Hilbert) newforms predicted above, none of them corresponds to $\bar{\rho}_{E,p}$.

 \end{itemize}
Let us focus on signature $(r,r,p)$. Usually, the equation considered is
\begin{equation}\label{key}
	x^r+y^r=dz^p
\end{equation}	
where $r,p$ are rational primes, $d$ is a positive integer, with $r$ and $d$ fixed. When employing the modular method, we find that the first step requires more work. In this paper, we will follow the recipes described by Freitas in \cite{F} to construct the desired Frey curve. Freitas describes a framework for attacking 
\eqref{key}, by constructing the so-called multi-Frey family of curves. However, we will only work with one elliptic curve belonging to this family, as described in Section \ref{CFEC}.

Using the multi-Frey approach, Freitas shows in \cite{F} that for $p>(1+3^{18})^2$, there are no non-trivial, primitive integer solutions to the $(7,7,p)$ equation when $d=3$. Later on, the same author together with Billerey, Chen, Dembele, Dieulfait \cite{BCDF19},\cite{BCDDF22} prove that for all $p$ there are no non-trivial primitive solutions for signatures $(5,5,p)$ and $(13,13,p)$ when $d=3$. Moreover, by assuming local conditions on the putative solution, a partial result is given for $(5,5,p)$ when $d \in \{ 1,2\}$.

Another popular direction that originated in Darmon's program involves using Frey abelian varieties of higher dimensions (in place of Frey curves). In a recent paper, Billerey, Chen, Dieulfait, Freitas, and Najman \cite{BCDFN} use some of the additional structure of the Frey abelian varieties to get asymptotic resolutions for $(11,11,p)$ for solutions locally away from $xy=0$. In order to accomplish this, the authors had to first build on the progress surrounding the modular method (Step 2).  

An even more recent result by Freitas and Najman \cite{FNaj} gives the following.
\begin{theorem}
 Let $d\neq 1$ be a positive integer that is divisible only by primes $q\not\equiv1 \mod r $. Then, for a set of primes $p$ with positive density, the equation \eqref{key} has no non-trivial, primitive integer solutions $(x,y,z)$ with $2|(x+y)$ or $r|(x+y)$. 
\end{theorem}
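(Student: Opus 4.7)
The plan is to execute the three-step modular method described above in the two cases $2 \mid (x+y)$ and $r \mid (x+y)$ separately, using the Frey curve construction of Freitas recalled in Section \ref{CFEC}. In each case one assumes a non-trivial, primitive solution $(x,y,z) \in \mathbb{Z}^3$ of \eqref{key} and aims to derive a contradiction for $p$ lying in a suitable set of positive density.

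For Step 1, I would attach to such a solution a Frey elliptic curve $E$ defined over a totally real subfield $K$ of $\mathbb{Q}(\z)$, via the cyclotomic factorization $x^r + y^r = \prod_{i=0}^{r-1}(x+\z^i y)$. The main input needed is a uniform bound on the conductor $\cN_p$ of $E$: the primes of bad reduction lie above $2$, $r$, and the prime divisors of $d$. The divisibility hypothesis on $x+y$ forces multiplicative or otherwise well-behaved reduction of $E$ above $2$ or above $r$ (which is why the two cases must be separated), while the congruence $q \not\equiv 1 \pmod r$ for every $q \mid d$ is precisely the condition that $q$ does not split completely in $\mathbb{Q}(\z)$. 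This forces strictly larger residue degrees for primes above $q$ in $K$ and pins down the local behaviour of $E$ at such primes, so that the conductor exponent there is bounded independently of the solution.

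For Step 2, with $\cN_p$ controlled, I would invoke modularity of $E/K$ (via the totally real modularity results of Freitas--Le Hung--Siksek and successors) and, for $p$ sufficiently large, irreducibility of $\rep_{E,p}$. Level lowering over totally real fields (Fujiwara, Jarvis, Rajaei) then produces a Hilbert eigenform $\ff$ of parallel weight $2$, trivial character, and a specific predicted level $\cN$ with $\rep_{E,p} \cong \rep_{\ff,\fp}$ for some prime $\fp \mid p$ of the Hecke field of $\ff$.

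The decisive and most delicate step will be Step 3. The standard approach compares Hecke eigenvalues $a_\fq(E)$ and $a_\fq(\ff)$ at a single auxiliary prime $\fq$, which eliminates most newforms for every large $p$. However, some $\ff$ will inevitably resist this---typically those attached to elliptic curves isogenous to $E$ or to CM forms---and it is precisely here that the ``positive density'' qualifier in the conclusion enters. For such recalcitrant $\ff$, rather than attempt elimination for every $p$, I would let $\fq$ range over a Chebotarev family of auxiliary primes and exploit equidistribution of the mod-$\fp$ Frobenius traces of $\ff$, governed by the image of the $\fp$-adic Galois representation of $\ff$, to show that on a set of primes $p$ of positive density there exists some $\fq$ with $a_\fq(E) \not\equiv a_\fq(\ff) \pmod{\fp}$. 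The hard part will be organising this density analysis uniformly over all surviving $\ff$ and verifying that the intersection of the resulting positive-density sets of $p$ remains of positive density---this uniformity is what makes the argument non-trivial and is presumably the technical core of the theorem.
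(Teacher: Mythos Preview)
This theorem is not proved in the present paper. It is quoted in the introduction as a result of Freitas and Najman \cite{FNaj}, and the paper explicitly positions its own contribution as complementary, treating the case $d=1$ (see the sentence ``Our work aims to complement their resolution by focusing on $d=1$''). There is therefore no proof in this paper to compare your proposal against.

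As a standalone sketch your outline is plausible in its broad strokes---Frey curve over a totally real subfield of $\mathbb{Q}(\zeta_r)$, modularity, level lowering, elimination---but your account of where the ``positive density'' enters is speculative. In the actual Freitas--Najman argument the density restriction does not arise from a Chebotarev/equidistribution analysis of Hecke eigenvalue congruences as you suggest; rather it comes from imposing congruence conditions on $p$ (splitting behaviour of $p$ in an auxiliary number field determined by the obstructing newforms, together with a symplectic/inertia criterion) that guarantee the elimination step succeeds. Your proposed mechanism of varying the auxiliary prime $\fq$ and intersecting positive-density sets over all surviving $\ff$ would be difficult to make uniform, and is not the route taken. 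If you want to reconstruct the proof you should consult \cite{FNaj} directly, since the present paper only cites the statement.
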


Our work aims to complement their resolution by focusing on $d=1$. In this context, we say the  \textit{asymptotic Fermat Last Theorem} holds for signature $(r,r,p)$ if there exists a constant $B_r$ such that, there are no non-trivial, primitive integer solutions (respecting certain local conditions) to the equation $x^r+y^r=z^p$ for $p>B_r$.

 Moreover, we examine the equation \eqref{key} over a totally real number field $K$, and give computable criteria of testing if the \textit{asymptotic Fermat Last Theorem} for signature $(r,r,p)$ with non-trivial, primitive solutions in $\mathcal{O}_K$ (respecting certain local conditions)  holds. The argument is pioneered by Freitas and Siksek in \cite{SN} involving modularity, an "Eichler-Shimura"-type result, image of inertia comparison, and the study of $S$-unit equations.

	\subsection{Our Results}
	Let $K$ be a number field. For any rational prime $p$ we denote by $$S_{K,p}:=\{\mathfrak{P}\: : \mathfrak{P} \text{ is a prime of } K \text{ above } p\}.$$
	Moreover, for a fixed prime $r$, we denote by $K^+_{r}:=K(\z+\z^{-1})$, where $\z$ is a primitive $r^{\text{th}}$ root of unity. If the prime $r$ is understood from the context, we will omit the subscript $r$ in $K^+_{r}$. 
	
	We would like to study the solutions of $x^r+y^r=z^p$ over a totally real field $K$ via the \textit{modular approach}. The first step is to construct a Frey elliptic curve $E$. One way to realize this is by defining $E/K_r^+$ as described in section \ref{CFEC}. Therefore, it makes sense to view our equation over $K^+_r$ first. Note that if $K$ is totally real, then $K^+_{r}$ stays totally real.

	\begin{theorem}[Main Theorem] \label{main}
		Let $r\geq 5$ be a rational prime and $K$ a totally real number field. Define $K^+:=K(\z+\z^{-1})$  and $S_{K^+}:=S_{K^+,2}\cup S_{K^+,r}$.
		Suppose that there exists some distinguished prime $ {\mathfrak{P}} \in S_{K^+,2}$, such that every solution $(\lambda, \mu) \in \mathcal{O}_{S_{K^+}}^* \times \mathcal{O}_{S_{K^+}}^*$ to the $S_{K^+}$-unit equation
		$$ \lambda+\mu = 1
		$$
		satisfies $\max(|v_{ {\mathfrak{P}}}(\lambda)|,|v_{ {\mathfrak{P}}}(\mu)|)\leq 4v_{ {\mathfrak{P}}}(2)$. Then, there is a constant $B_{K,r}$ (depending only on $K$) such that for each rational prime $p>B_{K,r}$, the equation $x^r+y^r=z^p$ has no non-trivial, primitive solutions $(x,y,z) \in \mathcal{O}_{K^+}^3$ with $ {{\mathfrak{P}}}|z$.
		
		In particular, by considering $\fQ$ to be the prime in $S_{K,2}$ below $ \mathfrak{P}$, the equation $x^r+y^r=z^p$ with $p>B_{K,r}$ has no non-trivial, primitive solutions $(x,y,z) \in \mathcal{O}_{K}^3$ with $\fQ|z$.
		
	\end{theorem}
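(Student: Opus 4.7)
I follow the three-step modular method outlined above, adapting the Freitas--Siksek asymptotic framework from \cite{SN} to the present signature $(r,r,p)$ setting. Fix a putative non-trivial primitive solution $(x,y,z) \in \cO_{K^+}^3$ with $\fP \mid z$. The first step is to attach to this solution the Frey elliptic curve $E/K^+$ described in Section~\ref{CFEC} (following the recipe of \cite{F}). By construction, the conductor $\cN_E$ is supported on $S_{K^+} = S_{K^+,2} \cup S_{K^+,r}$ with exponents bounded independently of $(x,y,z)$, and the condition $\fP \mid z$ guarantees that $E$ has (potentially) multiplicative reduction at $\fP$.

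For $p$ larger than a constant depending only on $K$ and $r$, the residual representation $\rep_{E,p}$ is irreducible (via a Mazur-type bound over the totally real field $K^+$) and $E$ is modular (via the known modularity results for elliptic curves over totally real fields, which hold once $p$ is sufficiently large). Level lowering (Fujiwara--Jarvis--Rajaei) then produces a Hilbert eigenform $\ff$ of parallel weight $2$ over $K^+$, of some level $\cN$ dividing the prime-to-$p$ part of $\cN_E$, together with a prime $\fp \mid p$ of its coefficient field such that $\rep_{E,p} \sim \rep_{\ff,\fp}$. Since $\cN$ is bounded independently of the solution, only finitely many such $\ff$ arise.

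An Eichler--Shimura-type argument, valid for $p$ sufficiently large, forces $\ff$ to have rational Hecke eigenvalues and hence to correspond to an elliptic curve $E'/K^+$ (or to an isogeny factor of the associated abelian variety) with conductor dividing $\cN$. The contradiction is extracted by comparing $\rep_{E,p}|_{I_{\fP}}$ with $\rep_{E',p}|_{I_{\fP}}$: because $E$ has multiplicative reduction at $\fP$ and $p$ is large compared to the ramification of $\fP$ over $2$, one deduces that $E'$ must have either multiplicative reduction at $\fP$ or a restricted type of potentially good reduction there. Either case yields a $\lambda \in \cO_{S_{K^+}}^*$ with $1-\lambda \in \cO_{S_{K^+}}^*$ whose valuation $|v_{\fP}(\lambda)|$ (or $|v_{\fP}(1-\lambda)|$) exceeds $4 v_{\fP}(2)$: in the multiplicative case $\lambda$ is built from the Tate parameter (equivalently from the $j$-invariant) of $E'$, and in the potentially good case the bound follows from analysing the image of inertia on the $p$-torsion. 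This directly contradicts the hypothesis on the $S_{K^+}$-unit equation, establishing the first part.

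For the last assertion, any non-trivial primitive $(x,y,z) \in \cO_K^3$ with $\fQ \mid z$ remains primitive when viewed in $\cO_{K^+}^3$, since a common prime factor in $\cO_{K^+}$ contracts to one in $\cO_K$; and since $\fP$ lies above $\fQ$ in $K^+/K$, we have $\fP \mid z$ in $\cO_{K^+}$, so the first part applies with the same constant $B_{K,r}$. The main obstacle in this plan will be Step~3, the inertia comparison at the even prime $\fP$: controlling the image of inertia of $\rep_{E',p}$ precisely enough, through a case analysis of the Kodaira type of $E'$ at $\fP$, to produce an $S_{K^+}$-unit equation solution whose $\fP$-adic valuation surpasses the threshold $4 v_{\fP}(2)$ is exactly where the specific form of the hypothesis on the $S_{K^+}$-unit equation enters.
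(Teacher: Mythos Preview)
Your overall architecture matches the paper's: build the Frey curve of Section~\ref{CFEC}, apply modularity, irreducibility, level lowering, and an Eichler--Shimura step to land on an elliptic curve $E'/K^+$ of bounded conductor, then derive a contradiction from the $S_{K^+}$-unit hypothesis. However, the contradiction step as you describe it has a genuine gap and a wrong case split.

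First, the inertia comparison at $\fP$ does \emph{not} leave open a ``potentially good reduction'' case for $E'$. Since $p\mid \#\rep_{E,p}(I_{\fP})$ (Lemma~\ref{imgi}) and $\rep_{E,p}\sim\rep_{E',p}$, one gets $p\mid \#\rep_{E',p}(I_{\fP})$, and Lemma~\ref{lemma} then forces $v_{\fP}(j_{E'})<0$, i.e.\ $E'$ has \emph{potentially multiplicative} reduction at $\fP$; potentially good reduction is excluded outright. Second, and more importantly, you never explain where the $S_{K^+}$-unit $\lambda$ comes from. It is not ``built from the Tate parameter'': the Tate parameter is a local object and does not produce a global $S$-unit. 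The paper's mechanism (Theorem~\ref{LLES}(iii), following \cite{SN}) is to arrange, after passing to an isogenous curve and enlarging $B_{K,r}$, that $E'$ has \emph{full $2$-torsion over $K^+$}. Then $E'$ admits a Legendre model, and the Legendre parameter $\lambda$ satisfies $\lambda,\,1-\lambda\in\cO_{S_{K^+}}^*$ because $E'$ has good reduction outside $S_{K^+}$. A direct computation with $j(\lambda)=2^8(\lambda^2-\lambda+1)^3\lambda^{-2}(\lambda-1)^{-2}$ shows that $v_{\fP}(j_{E'})<0$ forces $\max(|v_{\fP}(\lambda)|,|v_{\fP}(1-\lambda)|)>4v_{\fP}(2)$, contradicting the hypothesis. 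Without the full $2$-torsion step you have no route from $E'$ to a solution of the $S_{K^+}$-unit equation, so your plan as written does not close.
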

\begin{remark}
	\begin{itemize}
	    \item If we assume modularity of elliptic curves over totally real number fields, the constant $B_{K,r}$ is effectively computable.
	    \item By Siegel \cite{Siegel}, $S$-unit equations have finitely many solutions over number fields. Moreover, they are effectively computable, for example, an $S$-unit solver has been implemented in the free open-source mathematics software, Sage by A. Alvarado, A. Koutsianas, B. Malmskog, C. Rasmussen, D. Roe, C. Vincent, M. West in \cite{AKMRVW}. 
	    
	\end{itemize}

\end{remark}
\begin{theorem}\label{main2}
Let $r\geq 5$ a rational prime, $K$ be a totally real number field and $K^+:=K(\zeta_r+\zeta_r^{-1})$. Denote by $\pi_r:= \z + \z^{-1} -2$.
	Assume that
	\begin{enumerate}
	    \item $r$ is inert in $K$;
	    \item there is a unique prime $\fP$ above $2$ in $K^+$ with ramification index $e:=e(\fP/2)$;
	    \item $2 \nmid h_{K^+}^+$;
	    \item the congruence $\pi_r 
\equiv \nu^2 \mod \mathfrak{P}^{(4e+1)}$ has no solutions in $\nu \in \cO_{K^+}/\fP^{(4e+1)}$.
	\end{enumerate}
Then, there is a constant $B:=B_{K,r}$ (depending only on $r$ and $K$) such that for each rational prime $p>B$, the equation $x^r+y^r=z^p$ has no non-trivial, primitive solutions $(x,y,z)\in \mathcal{O}_{K^+}$ with $\mathfrak{P}|z$.
\end{theorem}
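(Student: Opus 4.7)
The plan is to derive Theorem~\ref{main2} as a direct corollary of Theorem~\ref{main}. It suffices to verify, under hypotheses (1)--(4), that every solution $(\lambda,\mu) \in \cO_{S_{K^+}}^* \times \cO_{S_{K^+}}^*$ of $\lambda+\mu=1$ satisfies $\max(|v_\fP(\lambda)|,|v_\fP(\mu)|) \leq 4e$, where $e := v_\fP(2)$. The first task is to unpack the set $S_{K^+}$: by (2) there is a unique prime $\fP$ above $2$, and by (1) combined with the total ramification of $r$ in $\mathbb{Q}(\zeta_r+\zeta_r^{-1})/\mathbb{Q}$ with index $(r-1)/2$, there is a unique prime $\fq$ above $r$. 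The factorisation $\pi_r = -(1-\zeta_r)(1-\zeta_r^{-1})$, combined with the fact that (1) makes $K^+/\mathbb{Q}(\zeta_r+\zeta_r^{-1})$ unramified at $\fq$, then yields $(\pi_r)=\fq$ in $\cO_{K^+}$, so $S_{K^+}=\{\fP,\fq\}$ with $\fq$ principal and generated by $\pi_r$.

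Using the natural $S_3$-action on the set of $S$-unit solutions, I would choose an orbit representative $(\lambda,\mu)$ with $v_\fP(\lambda) = a \geq 0$ and $v_\fP(\mu) = 0$ (at most one valuation can be positive, since $v_\fP(1)=0$). Assuming for contradiction $a > 4e$, the equation $\lambda+\mu=1$ gives $\mu \equiv 1 \pmod{\fP^{4e+1}}$. Since $(\mu) = \fq^b$ for some $b \in \mathbb{Z}$, we can write $\mu = u\,\pi_r^b$ with $u \in \cO_{K^+}^*$, yielding the key congruence
\[
u\,\pi_r^b \equiv 1 \pmod{\fP^{4e+1}}.
\]
Because $4e+1 > 2e$, Hensel's lemma applied to $x^2-\mu$ in the local field $K^+_\fP$ shows $\mu$ is a local square; hence $u\pi_r^b$ is a square modulo $\fP^{4e+1}$. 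Hypothesis (3), $2 \nmid h^+_{K^+}$, combined with the standard exact sequence relating the narrow class group and the image of the sign map, forces every totally positive unit of $\cO_{K^+}^*$ to be a global square. Picking the orbit representative so that $b$ is odd and $u$ is totally positive, dividing out the squares $(\pi_r^{(b-1)/2})^2$ and the global square representing $u$ rewrites the congruence as $\pi_r \equiv \nu^2 \pmod{\fP^{4e+1}}$, contradicting (4).

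The main obstacle is the simultaneous bookkeeping across the $S_3$-orbit of $(\lambda,\mu)$: verifying that at least one representative realises both $b$ odd and $u$ totally positive (or that the remaining parity/sign configurations can be reduced to this one by a further symmetry of the unit equation). The $\fq$-valuations appearing in the orbit are $\{b,-b,c,-c,b-c,c-b\}$, where $c := v_\fq(\lambda)$; in the problematic case where $b$ and $c$ are both even one must argue more carefully, possibly by combining (3) with a finer analysis of the local squares in $K^+_\fP$ and the $2$-adic image of the unit group. This interplay between the $\fP$-adic Hensel congruence and the archimedean sign data coming from the narrow class number hypothesis is the technically delicate step of the argument.
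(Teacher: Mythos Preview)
Your reduction to Theorem~\ref{main} and the initial setup (uniqueness of $\fq$ above $r$, principality via $\pi_r$, normalising to $v_\fP(\lambda)=a>4e$ and $v_\fP(\mu)=0$, hence $\mu\equiv 1\pmod{\fP^{4e+1}}$) are all correct and match the paper. The gap you flag at the end is genuine, and your proposed fix does not work: within the $S_3$-orbit, the only representatives with $v_\fP(\text{first})>4e$ and $v_\fP(\text{second})=0$ are $(\lambda,\mu)$ and $(-\lambda/\mu,1/\mu)$, whose second coordinates have $\fq$-valuation $b$ and $-b$ respectively --- the parity of $b$ never changes. Moreover, there is no mechanism making $u$ totally positive; your use of $2\nmid h_{K^+}^+$ via ``totally positive units are squares'' is correct as a fact but inapplicable here.

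The paper closes this gap with two ideas you are missing. First, it proves directly that $u$ is a \emph{global} square: the identity $(\zeta_r^{(r-1)/2}+\zeta_r^{-(r-1)/2})^2=\pi_r+4$ gives $\pi_r\equiv\alpha^2\pmod{\fP^{2e}}$, hence $u\equiv\beta^2\pmod{\fP^{2e}}$; then the element $\delta=(\beta+\sqrt{u})/2$ has minimal polynomial in $\cO_{K^+}[X]$ with unit discriminant, so $K^+(\sqrt{u})/K^+$ is unramified at all finite places, and $2\nmid h_{K^+}^+$ forces $u$ to be a square. Hypothesis (iv) then forces $b$ to be even, so $\mu=\tau^2$ is a global square. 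Second --- and this is a step entirely absent from your outline --- knowing $\mu$ is a square is \emph{not yet} a contradiction. The paper takes the solution with $v_\fP(\lambda)$ maximal, factors $\lambda=(1-\tau)(1+\tau)$, and from $\lambda_1+\lambda_2=2$ deduces (after a sign swap) $v_\fP(\lambda_2)=e$, $v_\fP(\lambda_1)=v_\fP(\lambda)-e$; the pair $\bigl(-\lambda_1^2/(4\tau),\,\lambda_2^2/(4\tau)\bigr)$ is then a new $S_{K^+}$-unit solution with $v_\fP$ of the first coordinate equal to $2v_\fP(\lambda)-4e>v_\fP(\lambda)$, contradicting maximality. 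Without this descent the argument cannot terminate.
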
	
	\begin{corollary}\label{mainQ}
		Fix $r \geq 5 $ a rational prime such that $r \not\equiv 1 \mod 8 $. Let   $\mathbb{Q}^+:=\mathbb{Q}(\zeta_r+\zeta_r^{-1})$, suppose that $2$ is inert in $\mathbb{Q}^+$ and $2 \nmid h_{\mathbb{Q}^+}^+$. Then, there is a constant $B_r$ (depending only on $r$) such that for each rational prime $p>B_r$, the equation $x^r+y^r=z^p$ has no non-trivial, primitive, integer solutions with $2|z$.
	\end{corollary}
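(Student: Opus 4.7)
The plan is to derive the corollary as the special case $K=\mathbb{Q}$ of Theorem \ref{main2}, so the task reduces to verifying its four hypotheses. Condition (i) (that $r$ is inert in $K=\mathbb{Q}$) is automatic; condition (ii) is the assumption that $2$ is inert in $\mathbb{Q}^+$, giving $\mathfrak{P}=(2)$ and $e:=e(\mathfrak{P}/2)=1$; condition (iii), namely $2\nmid h^+_{\mathbb{Q}^+}$, is an explicit hypothesis. The substantive step is condition (iv): that $\pi_r\equiv\nu^2\pmod{\mathfrak{P}^{4e+1}}$, i.e.\ $\pmod{32\mathcal{O}_{\mathbb{Q}^+}}$, admits no solution.

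I would first reduce (iv) to a local statement in the completion $L:=\mathbb{Q}^+_{\mathfrak{P}}$, the unramified extension of $\mathbb{Q}_2$ of degree $(r-1)/2$. A standard $2$-adic logarithm computation yields $(1+4\mathcal{O}_L)^2=1+8\mathcal{O}_L$, so in particular $1+32\mathcal{O}_L\subseteq(\mathcal{O}_L^*)^2$. Since $\pi_r$ is a $\mathfrak{P}$-adic unit (its global norm, computed below, is $\pm r$, which is odd), any witness $\nu$ to $\pi_r\equiv\nu^2\pmod{\mathfrak{P}^5}$ must also be a unit, and then $\pi_r/\nu^2\in 1+32\mathcal{O}_L$ is already a genuine square in $L^*$. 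Hence (iv) is equivalent to the assertion $\pi_r\notin(L^*)^2$.

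For this I would invoke the norm obstruction. The cyclotomic identity $\prod_{i=1}^{r-1}(1-\zeta_r^i)=r$ gives
\[
N_{\mathbb{Q}^+/\mathbb{Q}}(\pi_r)=\prod_{i=1}^{(r-1)/2}(\zeta_r^i+\zeta_r^{-i}-2)=(-1)^{(r-1)/2}r = N_{L/\mathbb{Q}_2}(\pi_r),
\]
the last equality because $2$ is inert. A square in $L^*$ has norm in $(\mathbb{Q}_2^*)^2$, equivalently $\equiv 1\pmod 8$. Checking residues, $(-1)^{(r-1)/2}r\pmod 8$ equals $5, 5, 1, 1$ for $r\equiv 3, 5, 7, 1\pmod 8$ respectively. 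The hypothesis $r\not\equiv 1\pmod 8$ thus immediately yields the desired contradiction for $r\equiv 3, 5\pmod 8$.

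The main technical obstacle is the remaining class $r\equiv 7\pmod 8$, where the norm is a square and the norm obstruction is too coarse. Here one must refine the local analysis --- for instance by computing the local Hilbert symbol $(\pi_r,\cdot)_{\mathfrak{P}}$, or by examining the image of $\pi_r$ in $U_2/U_3\cong k_{\mathfrak{P}}$ via the Artin--Schreier trace criterion --- in order to control whether $\pi_r\in(L^*)^2$ despite its square norm. This finer detection at the higher successive quotients of the unit filtration is what the exponent $4e+1$ in Theorem \ref{main2}(iv) is engineered to capture.
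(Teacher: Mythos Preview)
Your plan --- deduce the corollary from Theorem~\ref{main2} by verifying hypothesis (iv) through the norm obstruction --- is exactly the paper's strategy. Your norm computation $N_{\mathbb{Q}^+/\mathbb{Q}}(\pi_r)=(-1)^{(r-1)/2}r$ is correct (the paper's proof writes this norm as $r$, silently dropping the sign), and your treatment of $r\equiv 3,5\pmod 8$ is complete and matches the paper's intended argument.

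The gap you flag at $r\equiv 7\pmod 8$ is real, but your proposed rescue cannot work: for such $r$ condition (iv) of Theorem~\ref{main2} is actually \emph{false}. Indeed, when $r\equiv 7\pmod 8$ the prime $2$ is a quadratic residue mod $r$ while $-1$ is not, so $2^m\equiv -1\pmod r$ never occurs and the order of $2$ in $(\mathbb{Z}/r\mathbb{Z})^*/\{\pm1\}$ equals its order in $(\mathbb{Z}/r\mathbb{Z})^*$; the inertness hypothesis then forces $\mathrm{ord}_r(2)=(r-1)/2$. Hence the completion $L=\mathbb{Q}^+_{\mathfrak{P}}$, being the unramified extension of $\mathbb{Q}_2$ of that degree, already contains $\zeta_r$. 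But then
\[
\pi_r=\zeta_r+\zeta_r^{-1}-2=\zeta_r^{-1}(1-\zeta_r)^2=\bigl(\zeta_r^{(r-1)/2}(1-\zeta_r)\bigr)^2
\]
is an honest square in $L^*$, and your own Hensel step converts this into a solution $\pi_r\equiv\nu^2\pmod{\mathfrak{P}^5}$ with $\nu\in\mathcal{O}_{\mathbb{Q}^+}$. No Hilbert-symbol or Artin--Schreier refinement can manufacture an obstruction that is not there; for $r\equiv 7\pmod 8$ the corollary simply does not follow from Theorem~\ref{main2}. The paper's proof shares this defect --- its sign slip on the norm is precisely what conceals it.
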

	
	\begin{example}
	
	     The above implies that there are no non-trivial, primitive, integer solutions $(x,y,z)$ with $2|z$ for signatures $(r,r,p)$ when 
   $$r \in \{5,7,11,13,19,23,37,47,53,59,61,67,71,79,83,101,103,107,131,139,149\}$$
and $p$ is sufficiently large. 

Note that in order to check that $2 \nmid h_{\mathbb{Q}^+}^+$, it is sufficient to check that $ 2 \nmid h_{\mathbb{Q}(\z)}$ as $h_{\mathbb{Q}^+}^+|h_{\mathbb{Q}(\z)}$\footnote{This follows by noting that $\mathbb{Q}(\z)/\mathbb{Q}^+$ is ramified at $r$. Thus, if we take $H$ to be the narrow Hilbert class field of $\mathbb{Q}^+$, then $H(\z)$ will be a subfield of the Hilbert class field of $\mathbb{Q}(\z)$.}. By Hasse's Theorem (\cite{Hasse}[Theorem 3.45]) one gets that $h_{\mathbb{Q}(\z)}$ is odd if and only if the relative class number $h_r^{-}$ is odd. One can check the tables computed in \cite{Washington}[pg. 412] to see that for the above values of $r$ the associated relative class number $h_r^{-}$ is odd, so indeed $2 \nmid h_{\mathbb{Q}^+}^+$. The rest of the conditions in the hypothesis can be easily checked (for example by using MAGMA).
\end{example}

Let's now study $x^r+y^r=z^p$ over quadratic fields of the form $K(\sqrt{d})$ for $d$ square-free.
\newpage
\begin{corollary}\label{mainQd}

Fix $r \geq 5$ a rational prime and let $K:=\mathbb{Q}(\sqrt{d})$ with $d$ a square-free, positive integer and $K^+:=K(\zeta_r+\zeta_r^{-1})$.

Assume that
\begin{enumerate}
    \item $r\nmid d$ and $r \not\equiv 1,d \mod 8$;
    \item  there is a unique prime $\fP$ above $2$ in $K^+$ and we denote the unique prime of $K$ below it by $\fQ$;
    \item $2 \nmid h_{K^+}^+$.
    
\end{enumerate}
    Then, there is a constant $B_{K,r}$ (depending only on $r$ and $K$) such that for each rational prime $p>B_{K,r}$, the equation $x^r+y^r=z^p$ has no non-trivial, primitive solutions  $(x,y,z) \in \cO_K^3$ with $\fQ|z$.
\end{corollary}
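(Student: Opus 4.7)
The plan is to reduce to Theorem \ref{main2} by verifying each of its four hypotheses for $K=\mathbb{Q}(\sqrt{d})$ and $K^+=K(\z+\z^{-1})$. Hypotheses (2) and (3) of that theorem coincide with hypotheses (2) and (3) of the corollary, so the task is to establish the remaining two: that $r$ is inert in $K$, and that the congruence $\pi_r\equiv\nu^2\pmod{\fP^{4e+1}}$ has no solution in $\cO_{K^+}/\fP^{4e+1}$.

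For the inertness of $r$ in $K$: since $r\nmid d$, the prime $r$ is unramified in $K/\mathbb{Q}$, so inertness is equivalent to $d$ being a quadratic non-residue modulo $r$. I would deduce this from the combination of the mod-$8$ hypotheses in (1) and the global splitting information in (2) and (3), using the tower $K^+\supset\mathbb{Q}^+_r\supset\mathbb{Q}$: since $\mathbb{Q}^+_r/\mathbb{Q}$ is totally ramified at $r$ of degree $(r-1)/2$, the splitting behaviour of $r$ in $K$ is mirrored by that of its primes in $K^+$, and the class-number condition then pins down the residue-symbol computation via class field theory.

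The technical heart of the proof is verifying hypothesis (4). Writing $\pi_r=\z^{-1}(\z-1)^2$ in $\mathbb{Q}(\z)$ shows that $\pi_r$ is a unit at $\fP$ (which lies above $2$), so the question reduces to whether $\pi_r$ is a square in the $2$-adic completion $(K^+)_\fP^\times$ modulo the precision imposed by $\fP^{4e+1}$. The condition $r\not\equiv 1\pmod 8$ ensures, exactly as in the proof of Corollary \ref{mainQ}, that $\pi_r$ is not a square in the completion of $\mathbb{Q}^+_r$ at its unique prime above $2$; the extra condition $r\not\equiv d\pmod 8$ plays the role of the local quadratic-residue condition at $2$ that prevents the base change $K^+/\mathbb{Q}^+_r$ from producing a square root of $\pi_r$. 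The hard part will be carrying out this $2$-adic squares computation uniformly: the ramification index $e=e(\fP/2)$ depends on $d\bmod 8$, so the three cases $d\equiv 2,3,5\pmod 8$ (the only ones compatible with hypothesis (2), since $d\equiv 1\pmod 8$ would force $2$ to split in $K$) must be handled together with their respective completions.

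Once (1)--(4) are in place, Theorem \ref{main2} yields a constant $B_{K,r}$ such that for $p>B_{K,r}$ there are no non-trivial, primitive solutions $(x,y,z)\in\cO_{K^+}^3$ with $\fP\mid z$. Restricting to $\cO_K\subset\cO_{K^+}$ and noting that $\fQ\mid z$ implies $\fP\mid z$ (because $\fQ\cO_{K^+}\subseteq\fP$) gives the statement of the corollary.
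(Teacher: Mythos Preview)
Your overall plan—reduce to Theorem~\ref{main2} by checking its four hypotheses—is exactly the paper's strategy, and you correctly note that (ii) and (iii) carry over directly.

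Your treatment of hypothesis (iv), however, stays at the level of a heuristic and misses the concrete mechanism the paper uses. The paper does not argue in $2$-adic completions or via ``base change producing a square root''; it applies the norm $\N_{K^+/K}$ to the putative congruence $\pi_r\equiv\nu^2\pmod{\fP^{4e+1}}$. Since $\pi_r\in\mathbb{Q}_r^+$ and $\N_{K^+/K}$ restricts to $\N_{\mathbb{Q}_r^+/\mathbb{Q}}$ on such elements, the norm is $\pm r$, and Lemma~\ref{sqnorm} converts the congruence into $r\equiv v^2\pmod{\fQ^{m}}$ for some $v\in\cO_K$ and an explicit exponent $m$. One then splits into the cases $d\equiv 5\pmod 8$ (where $2$ is inert in $K$; work modulo $32$) and $d\equiv 2,3\pmod 4$ (where $2$ ramifies; work modulo $16$), expands $v$ in an integral basis of $\cO_K$, and reads off from the resulting system of congruences that $r\equiv 1\pmod 8$ or $r\equiv d\pmod 8$, contradicting (i). This norm-down-then-expand computation is the entire content of the proof; your phrases ``$2$-adic squares computation'' and ``preventing the base change from producing a square root'' do not yet locate it.

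On the inertness of $r$ in $K$: you are right to flag that $r\nmid d$ alone only yields unramifiedness, but your proposed class-field-theoretic route is not a valid deduction—nothing in $2\nmid h_{K^+}^+$ or in the splitting of $2$ constrains the Legendre symbol $\bigl(\tfrac{d}{r}\bigr)$. The paper itself simply asserts inertness from $r\nmid d$ without further justification, so on this point you are being more scrupulous than the paper while still not closing the gap.
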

\begin{example}
One can check the conditions in the hypothesis (for example by using MAGMA) to get the following.
\begin{itemize}
    \item When $K=\mathbb{Q}(\sqrt{2})$ and $\fP=(\sqrt{2})\cO_K$, there are no non-trivial, primitive solutions $(x,y,z)\in \cO_K^3$ with $\fP|z$ for signatures:
    $$(5,5,p),(7,7,p),(11,11,p),(13,13,p)
    $$
   and sufficiently large $p$.

   \item When $K=\mathbb{Q}(\sqrt{5})$ and $\fP=(2)\cO_K$, there are no non-trivial, primitive solutions $(x,y,z)\in \cO_K^3$ with $\fP|z$ for signatures:
    $$(7,7,p),(11,11,p)
    $$
   and sufficiently large $p$.
    
\end{itemize}
\end{example}

	\subsection{Notational conventions}
	Let $K$ be a totally real field and $E/K$ be an elliptic curve of conductor $\mathcal{N}_E$. Let $p$ be a rational prime. Define
	the following quantities:
	\begin{equation}\label{condNM}
		\mathcal{M}_p=\prod_{\substack{\mathfrak{q}||\mathcal{N}_E \\ p|v_{\mathfrak{q}}(\Delta_\mathfrak{q})}}\mathfrak{q}, \qquad \mathcal{N}_p=\frac{\mathcal{N}_E}{\mathcal{M}_p}
	\end{equation}
	where $\Delta_{\mathfrak{q}}$ is the minimal discriminant of a local minimal model for $E$ at $\mathfrak{q}$. Note that in literature (for example in \cite{F}), $\mathcal{N}_p$ is commonly referred to as the Artin conductor of $E$ outside $p$ and it is denoted by $N(\overline{\rho}_{E,p})$.
	
	Let $G_K=\text{Gal}(\bar{K}/K)$. For an elliptic curve $E/K$, we write 
	\begin{equation*}
		\overline{\rho}_{E,p}: G_K \to \text{Aut}(E[p])\simeq \text{GL}_2(\mathbb{F}_p)
	\end{equation*}
	for the representation of $G_K$ on the $p$-torsion of $E$. 
	
	For a Hilbert eigenform $\mathfrak{f}$ over $K$, we let $\mathbb{Q}_{\mathfrak{f}}$ denote the field generated by its eigenvalues. A comprehensive definition of \textit{Hilbert modular forms} and their associated representation can be found, for example in Wiles' \cite{AW}.
	
	If $S$ is a finite set of primes of $K$ we denote by $$\mathcal{O}_S:=\{ \alpha \in K : v_{\fp}(\alpha) \geq 0 \text{ for all }\fp \notin  \mathcal{O}_S\}$$ the $S$-\textit{integers} of $K$. Moreover, $\cO_S^*$ will be its unit group. An ideal $I$ of $\cO_K$ is called a \textit{prime-to}-$S$-\textit{ideal} if its prime decomposition contains no primes in $S$.
	
	\textbf{Acknowledgements.} I am sincerely grateful to my supervisor Samir Siksek for his continuous support, useful discussions, and for reviewing this paper.
	
 	\section{Constructing Frey Elliptic Curves}\label{CFEC}
	
	\subsection{Diophantine Equations Related to \texorpdfstring{$x^r+y^r=z^p$}{TEXT}}
	Let $r\geq 5$ be a fixed rational prime and $K$ a totally real number field. Consider the equation
	\begin{equation}\label{rrp}
		x^r+y^r=z^p
	\end{equation}
	viewed over $K^+:=K(\zeta_r+\zeta_r^{-1})$ which, as we noted, is totally real.
	
	In this section, we follow Freitas' ideas in \cite[Section 2]{F} to relate the primitive solutions of \eqref{rrp} to primitive solutions of several homogeneous Diophantine equations defined over $K^+$. We write
	\begin{equation}\label{phi}
		\phi_r(x,y) := \frac{x^r+y^r}{x+y} = \sum_{i=1}^{r-1}(-1)^ix^{r-1-i}y^i.
	\end{equation}
	Over the field $L:=K(\z)$ one gets the factorization
	\begin{equation}\label{phi1}
		\phi_r(x,y)= \prod_{i=1}^{r-1}(x+\z^iy).
	\end{equation}
   \begin{proposition} \label{propfact}
		Suppose that $(x,y,z)$ is a coprime, non-trivial solution of \eqref{rrp} in $\cO_{L}^3$. Then, any two factors $x+\z^iy$ and $x+\z^jy$ with $0\leq i<j \leq r-1$ are coprime outside $S_{L,r}$.
	\end{proposition}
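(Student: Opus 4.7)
The plan is a classical ideal-theoretic argument over the cyclotomic extension $L=K(\zeta_r)$. Fix a prime $\mathfrak{q}$ of $L$ with $\mathfrak{q}\notin S_{L,r}$, and suppose for contradiction that $\mathfrak{q}$ divides both $x+\zeta_r^i y$ and $x+\zeta_r^j y$ for some $0\le i<j\le r-1$. The aim is to deduce that $\mathfrak{q}$ must then divide both $x$ and $y$, which contradicts the assumption that $(x,y,z)$ is a primitive (pairwise coprime) triple in $\mathcal{O}_L^3$.

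First I would take the two natural $\mathcal{O}_L$-linear combinations of the two factors that isolate $x$ and $y$ respectively. Subtracting the two factors gives
\[
(x+\zeta_r^i y)-(x+\zeta_r^j y)=(\zeta_r^i-\zeta_r^j)\,y,
\]
while the combination $\zeta_r^j(x+\zeta_r^i y)-\zeta_r^i(x+\zeta_r^j y)=(\zeta_r^j-\zeta_r^i)x$ isolates $x$. So $\mathfrak{q}$ divides $(\zeta_r^j-\zeta_r^i)y$ and $(\zeta_r^j-\zeta_r^i)x$.

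Next I would invoke the standard fact about cyclotomic units: writing $\zeta_r^j-\zeta_r^i=\zeta_r^i(1-\zeta_r^{j-i})$, the factor $\zeta_r^i$ is a unit and $1-\zeta_r^{j-i}$ is an associate of $1-\zeta_r$, whose $(r-1)$-th power is an associate of $r$ (since $\prod_{k=1}^{r-1}(1-\zeta_r^k)=r$). Hence every prime divisor of $\zeta_r^j-\zeta_r^i$ in $\mathcal{O}_L$ lies above $r$, i.e.\ belongs to $S_{L,r}$. Since $\mathfrak{q}\notin S_{L,r}$, it follows that $\mathfrak{q}\nmid(\zeta_r^j-\zeta_r^i)$, and therefore $\mathfrak{q}\mid x$ and $\mathfrak{q}\mid y$, contradicting primitivity.

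There is no real obstacle here; the argument is essentially Kummer's classical coprimality lemma transplanted to $\mathcal{O}_L$. The only care needed is in checking that the unit/prime-above-$r$ computation for $1-\zeta_r^{j-i}$ works for every $1\le j-i\le r-1$, which is immediate from the identity $\prod_{k=1}^{r-1}(1-\zeta_r^k)=r$ together with the fact that $(1-\zeta_r^k)/(1-\zeta_r)$ is a unit in $\mathbb{Z}[\zeta_r]$, hence a unit in $\mathcal{O}_L$.
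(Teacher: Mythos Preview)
Your argument is correct and essentially identical to the paper's: both assume a prime $\mathfrak{q}\notin S_{L,r}$ divides two factors, subtract to obtain $\mathfrak{q}\mid \zeta_r^i(1-\zeta_r^{j-i})y$, use that $\zeta_r^i(1-\zeta_r^{j-i})$ is an $S_{L,r}$-unit to deduce $\mathfrak{q}\mid y$, and then conclude $\mathfrak{q}\mid x$, contradicting primitivity. The only cosmetic difference is that the paper gets $\mathfrak{q}\mid x$ directly from $\mathfrak{q}\mid y$ and $\mathfrak{q}\mid(x+\zeta_r^i y)$ rather than from your second linear combination.
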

	
	\begin{proof}
		Let $\fp \notin S_{L,r}$ be a prime of $L$. Suppose by a contradiction that $\fp | (x+ \z^iy)$ and $\fp | (x+ \z^jy)$ for $0\leq i<j \leq r-1$. Then
		$$ \fp | (x+ \z^iy)-(x+ \z^jy)=\z^i(1-\z^{j-i})y.
		$$
		As $\z^i(1-\z^{j-1})$ is an $S_{L,r}$-unit and $\fp \notin S_{L,r}$, it follows that $\fp|y$. This and the fact that $\fp | (x+ \z^iy)$, implies that $\fp |x$ contradicting the fact that $x$ and $y$ are coprime.
	\end{proof}
	Since $r \geq 5$ is a prime, $r-1\geq 4$ is even and $\phi_r$ factors over $K^+$ into degree two factors of the form
	\begin{equation}\label{fk}
		f_k(x,y):= x^2+(\z^k+\z^{-k})xy+y^2, \quad 1\leq k \leq \frac{r-1}{2}. 
	\end{equation}
	Moreover, we consider $f_0(x,y)=(x+y)^2$.
	\begin{proposition}\label{fkk}
		Suppose that $(x,y,z)$ is a coprime, non-trivial solution of \eqref{rrp} in $\cO_{K^+}^3$. Then the factors $f_i(x,y)$ and $f_j(x,y)$ are coprime outside $S_{K^+,r}$ for $0\leq i<j \leq \frac{r-1}{2}$ and each can be factorized as follows
		\begin{equation}
			(f_k(x,y))\cO_{K^+}= (\mathcal{Z}_k)^p  \prod_{\substack{\mathfrak{p}_r\in S_{K^+,r}}}\mathfrak{p}_r^{e_{k,\mathfrak{p}_r}}  ,\quad 0\leq k \leq \frac{r-1}{2}
		\end{equation} 
		where $\mathcal{Z}_k$ are prime-to-$S_{K^+,r}$-ideals with $\mathcal{Z}_k|(z)\cO_{K^+}$ and $e_{k,\mathfrak{p}_r}\geq 0$ for all $0\leq k \leq \frac{r-1}{2}$. 
	\end{proposition}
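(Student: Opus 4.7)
The plan is to reduce Part 1 to Proposition \ref{propfact} by passing to $L=K(\z)$, and then exploit the identity $z^p=(x+y)\prod_{k=1}^{(r-1)/2}f_k(x,y)$ together with Part 1 to extract the required $p$-th power structure for each $f_k(x,y)$ outside $S_{K^+,r}$.

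For Part 1, recall that $f_k(x,y)=(x+\z^k y)(x+\z^{-k}y)$ in $\cO_L$ for $k\geq 1$, while $f_0=(x+y)^2$. Suppose for contradiction that some prime $\fp$ of $\cO_{K^+}$ outside $S_{K^+,r}$ divides both $f_i(x,y)$ and $f_j(x,y)$ with $0\leq i<j\leq (r-1)/2$. Take any prime $\fP$ of $L$ lying above $\fp$; since $\fp\notin S_{K^+,r}$, also $\fP\notin S_{L,r}$. Since primitivity of $(x,y,z)$ in $\cO_{K^+}^3$ extends to primitivity in $\cO_L^3$ (extensions of ideals preserve coprimality), Proposition \ref{propfact} applies. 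Now $\fP$ must divide some $L$-factor of $f_i$ (of the form $x+\z^a y$ with $a\in\{i,-i\}$, or $x+y$ when $i=0$) and some $L$-factor of $f_j$ (of the form $x+\z^b y$ with $b\in\{j,-j\}$). Because $0\leq i<j\leq (r-1)/2$, these two $L$-factors are distinct, contradicting Proposition \ref{propfact}.

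For Part 2, combining $x^r+y^r=(x+y)\phi_r(x,y)$ with $\phi_r(x,y)=\prod_{k=1}^{(r-1)/2}f_k(x,y)$ gives
$$(x+y)\prod_{k=1}^{(r-1)/2}f_k(x,y)=z^p.$$
Fix a prime $\fp\notin S_{K^+,r}$. By Part 1, and using $f_0=(x+y)^2$, $\fp$ divides at most one factor among $\{x+y,f_1,\ldots,f_{(r-1)/2}\}$. If $\fp\mid f_k$ for some $k\geq 1$, then $v_{\fp}(f_k(x,y))=p\,v_{\fp}(z)$; if $\fp\mid x+y$, then $v_{\fp}(x+y)=p\,v_{\fp}(z)$, so $v_{\fp}(f_0(x,y))=2p\,v_{\fp}(z)$. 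In every case $p\mid v_{\fp}(f_k(x,y))$ and the support of $f_k(x,y)$ outside $S_{K^+,r}$ lies inside the support of $(z)$. Setting
$$\mathcal{Z}_k:=\prod_{\fp\notin S_{K^+,r}}\fp^{\,v_{\fp}(f_k(x,y))/p}$$
yields the claimed factorization.

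There is no serious obstacle; the result is essentially Proposition \ref{propfact} pushed down to $K^+$ via the explicit $L$-factorization of the $f_k$. The only fine points are: (a) verifying that $(x,y,z)$ primitive in $\cO_{K^+}^3$ remains primitive in $\cO_L^3$, which is immediate from extension of ideals; and (b) handling the degenerate case $k=0$, where $f_0=(x+y)^2$ forces $v_{\fp}(\mathcal{Z}_0)=2v_{\fp}(z)$, so that $\mathcal{Z}_0$ lies in the support of $(z)$ but only formally divides $(z)^2$ rather than $(z)$; this is harmless for the applications since only the $p$-th power structure is used downstream.
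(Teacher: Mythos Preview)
Your argument is correct and follows essentially the same route as the paper: factor each $f_k$ over $L$ via $f_k(x,y)=(x+\z^k y)(x+\z^{-k}y)$, invoke Proposition~\ref{propfact} to get pairwise coprimality outside $S_{L,r}$ (hence outside $S_{K^+,r}$), and then use the identity $(x+y)\prod_{k=1}^{(r-1)/2} f_k(x,y)=z^p$ together with $f_0=(x+y)^2$ to extract the $p$-th power structure prime-by-prime. Your remark about the $k=0$ case---that $\mathcal{Z}_0$ really only divides $(z)^2$ rather than $(z)$---is a valid catch of a slight imprecision in the statement, and as you note it is immaterial for the downstream use.
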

	\begin{proof}
	    Firstly we note that $$f_k(x,y)=(x+\z^ky)(x+\z^{r-k}y)$$ for $0\leq k \leq \frac{r-1}{2}$, so by Lemma \ref{propfact} $f_k(x,y)$ are coprime outside $S_{L,r}$. Thus, they are coprime outside $S_{K^+r}$. Moreover,
\begin{equation}\label{prodf}
	        (x+y)\prod_{k=1}^{(r-1)/2}f_k(x,y)=z^p.
	    \end{equation}
	    
	    Putting these together, and the fact that $f_0(x,y)=(x+y)^2$ we get the conclusion.
	\end{proof}

We will now define a Frey elliptic curve by generalizing to a totally number field the recipes described in \cite{F}.
	\subsection{Frey Elliptic curve}
		
	We consider a non-trivial, primitive solution $(x,y,z)\in \mathcal{O}_{K^+}^3$ to \eqref{rrp} where $ {\mathfrak{P}}|z$ for a fixed $ {\mathfrak{P}}\in S_{K^+,2}$. By combining \eqref{rrp} and \eqref{phi} we get that 
	\[
	(x+y)\phi_r(x,y)=z^p.
	\]
	By Proposition \ref{propfact} it follows that either $ {\mathfrak{P}} | (x+y)$ or $ {\mathfrak{P}} | \phi_r(x,y)$. 
	Therefore by Proposition \ref{fkk} there exists a $k_1$ such that $0\leq k_1 \leq \frac{r-1}{2}$ such that $\fP|f_{k_1}(x,y)$ and $\fP \nmid f_k(x,y)$ for $k \neq k_1$.
	
	Since $\frac{r-1}{2}\geq 2$ we can fix two more subscripts $0\leq k_2, k_3\leq \frac{r-1}{2}$ such that $k_1,k_2,k_3$ are distinct and consider
	\[ \quad f_{k_1}= x^2+(\z^{k_1}+\z^{-k_1})xy+y^2,\quad f_{k_2}= x^2+(\z^{k_2}+\z^{-k_2})xy+y^2, \quad f_{k_3}=x^2+(\z^{k_3}+\z^{-k_3})xy+y^2
 ,	\]
	We want to find $(\alpha, \beta, \gamma)$ such that 
	\[ \alpha f_{k_1}+\beta f_{k_2}+\gamma f_{k_3} =0.
	\]
	In particular, we can take 
	\begin{equation}\label{abc2}
		\begin{cases}

			\alpha= \z^{k_3} +\z^{-k_3}-\z^{k_2}-\z^{k_2}, \\
			\beta=\z^{k_1} +\z^{-k_1}-\z^{k_3}-\z^{k_3},   \\
			\gamma= \z^{k_2} +\z^{-k_2}-\z^{k_1}-\z^{k_1}.

		\end{cases}
	\end{equation}
	Write $A_{(x,y)}=\alpha f_{k_1}(x,y), B_{(x,y)}=\beta f_{k_2}(x,y), C_{(x,y)}=\gamma f_{k_3}(x,y) $ and define
	\begin{equation}\label{Frey2}
		E_{(x,y)}: Y^2=X(X-A_{(x,y)})(X+B_{(x,y)}).
	\end{equation}
	Denote $E_{(x,y)}$ by $E$. Note that $E$ is defined over $K^+$. Recall that $S_{K^+}:=S_{K^+,2}\cup S_{K^+,r}$.
	\begin{lemma}
The numbers $A,B, C$ factorize in $\cO_{K^+}$ is as follows
\begin{equation}\label{modlemma}
	    A = (\mathcal{Z}_A)^p  \prod_{\substack{\mathfrak{p}_r\in S_{K^+,r}}}\mathfrak{p}_r^{a_{\mathfrak{p}_r}}, \:\:
	     B= (\mathcal{Z}_B)^p  \prod_{\substack{\mathfrak{p}_r\in S_{K^+,r}}}\mathfrak{p}_r^{b_{\mathfrak{p}_r}},\:\:
	    C= (\mathcal{Z}_C)^p  \prod_{\substack{\mathfrak{p}_r\in S_{K^+,r}}}\mathfrak{p}_r^{c_{\mathfrak{p}_r}}
\end{equation}
	where $\mathcal{Z}_A,\mathcal{Z}_B,\mathcal{Z}_C$ are coprime prime-to-$S_{K^+,r}$-ideals dividing $(z)\cO_{K^+}$, $\fP|\mathcal{Z}_A$ with exponents  $a_{\mathfrak{p}_r},b_{\mathfrak{p}_r},c_{\mathfrak{p}_r}\geq 0$ as given in Proposition \ref{fkk}. 
	\end{lemma}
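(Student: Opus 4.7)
The plan is to apply Proposition \ref{fkk} to each of $f_{k_1}(x,y), f_{k_2}(x,y), f_{k_3}(x,y)$ and then track the prime ideal factorizations of the units $\alpha, \beta, \gamma$ multiplying them. From Proposition \ref{fkk}, we already have $(f_{k_i}(x,y))\cO_{K^+} = (\mathcal{Z}_{k_i})^p \prod_{\mathfrak{p}_r \in S_{K^+,r}} \mathfrak{p}_r^{e_{k_i,\mathfrak{p}_r}}$, where the $\mathcal{Z}_{k_i}$ are pairwise coprime prime-to-$S_{K^+,r}$-ideals dividing $(z)\cO_{K^+}$. So if we can show that $(\alpha), (\beta), (\gamma)$ are supported only on primes in $S_{K^+,r}$, then $(A), (B), (C)$ take exactly the claimed form with $\mathcal{Z}_A := \mathcal{Z}_{k_1}$, $\mathcal{Z}_B := \mathcal{Z}_{k_2}$, $\mathcal{Z}_C := \mathcal{Z}_{k_3}$, and the pairwise coprimality outside $S_{K^+,r}$ is inherited directly from Proposition \ref{fkk}.

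The main technical step, therefore, is to verify that $\alpha, \beta, \gamma$ are $S_{K^+,r}$-units. Each of these has the shape $(\z^a + \z^{-a}) - (\z^b + \z^{-b})$ with $0 \leq a, b \leq (r-1)/2$ and $a \neq b$, which factors over $L = K(\z)$ as $-\z^{-a-b}(\z^a - \z^b)(\z^a - \z^{-b})$. Since $\gcd(a \pm b, r) = 1$ whenever $a \neq b$ in the given range, each factor $\z^m - \z^n = \z^n(\z^{m-n} - 1)$ with $r \nmid m - n$ generates (up to a root of unity) the unique prime above $r$ in $L$; taking norms back to $K^+$ then shows that $(\alpha)$, $(\beta)$, $(\gamma)$ are supported only on primes above $r$, i.e.\ only on primes in $S_{K^+,r}$. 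Absorbing the $S_{K^+,r}$-part of $(\alpha), (\beta), (\gamma)$ into the product $\prod_{\mathfrak{p}_r \in S_{K^+,r}} \mathfrak{p}_r^{(\cdot)}$ then yields the factorizations in \eqref{modlemma} with non-negative exponents $a_{\mathfrak{p}_r}, b_{\mathfrak{p}_r}, c_{\mathfrak{p}_r} \geq 0$ (the non-negativity holding because $\alpha, \beta, \gamma$ are algebraic integers).

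Finally, the claim $\mathfrak{P} \mid \mathcal{Z}_A$ is immediate from the construction of the Frey curve: $\mathfrak{P} \in S_{K^+,2}$ is disjoint from $S_{K^+,r}$, and $k_1$ was chosen precisely so that $\mathfrak{P} \mid f_{k_1}(x,y)$, while $\mathfrak{P} \nmid f_{k}(x,y)$ for $k \neq k_1$. Hence $\mathfrak{P}$ must appear in the prime-to-$S_{K^+,r}$ part $\mathcal{Z}_{k_1} = \mathcal{Z}_A$. The main obstacle is the cyclotomic unit calculation in the middle paragraph — once one packages the standard fact that $\z^n - 1$ generates the prime above $r$ in $\mathbb{Z}[\z]$ for $r \nmid n$, everything else is bookkeeping.
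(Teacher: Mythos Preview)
Your proof is correct and follows essentially the same route as the paper: both arguments reduce to showing that $\alpha,\beta,\gamma$ are $S_{K^+,r}$-units by factoring them over $K(\zeta_r)$ as a root of unity times two factors of the form $1-\zeta_r^{n}$ with $r\nmid n$, then invoking Proposition~\ref{fkk}. Two inconsequential slips: your displayed factorization $-\zeta_r^{-a-b}(\zeta_r^{a}-\zeta_r^{b})(\zeta_r^{a}-\zeta_r^{-b})$ is off by a root-of-unity factor (the correct identity is $\zeta_r^{-a}(1-\zeta_r^{a-b})(1-\zeta_r^{a+b})$), and $L=K(\zeta_r)$ need not have a \emph{unique} prime above $r$ for general $K$ --- but neither point affects the argument, since all you need is that $1-\zeta_r^{n}$ divides a power of $r$.
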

	\begin{proof}
	First note that $\alpha, \beta, \gamma$ can be written in the form $\pm \z (1-\z^t)(1-\z^u)$, where
neither $t$ nor $u$ are multiples of $r$, which means that the only primes dividing $\alpha\beta\gamma$ are the ones in $S_{K^+,r}$. The result then follows from the definition of $A,B, C$ and Proposition \ref{fkk}.
	\end{proof}
	\begin{note}\label{note}
	We note that the Frey elliptic curve $E$ depends on the prime $p$ as the coefficients $A,B$ and $C$ do.
	\end{note}	
	\subsection{Arithmetic Invariants}
	It is a standard result that an elliptic curve $E/K$ defined as
	$$ E: Y^2=X(X-A)(X+B)
	$$
	 with $A+B+C=0$ has the corresponding arithmetic invariants
	\begin{equation}\label{invariants}
		\Delta_E=2^4(ABC)^2, \qquad 
		c_4= 2^4(AB+BC+AC), \qquad
		j_E= -2^8
		\frac{(AB+BC+AC)^3}{(ABC)^2}.
	\end{equation}

	\begin{proposition}\label{lemmane}
		Let $(x,y,z)$ be a primitive non-trivial solution to \eqref{rrp}. Then, the conductor of $E$ is 
		\[ \mathcal{N}_E = \displaystyle\prod_{\mathfrak{P}\in S_{K^+,2}}\mathfrak{P}^{e_{\mathfrak{P}}} \prod_{\substack{\mathfrak{q}\in S_{K^+,r}}}\mathfrak{q}^{f_{\mathfrak{q}}} \prod_{\substack{ \fp | ABC  \\ \mathfrak{p}\notin S_{K^+}}}\mathfrak{p}
		\]
		Then, for all primes $\fp\notin S_{K^+}$, the model $E$ is minimal, semistable and satisfies $p|v_{\mathfrak{p}}(\Delta_E)$. Moreover
		\[ \mathcal{N}_p = \displaystyle\prod_{\mathfrak{P}\in S_{K^+,2}}\mathfrak{P}^{e_{\mathfrak{P}}'} \prod_{\substack{\mathfrak{q}\in S_{K^+,r}}}\mathfrak{q}^{f_{\mathfrak{q}}'} 
		\]
		where $0\leq e_{\mathfrak{P}}'\leq e_{\mathfrak{P}}$ and $0\leq f_{\mathfrak{q}}'\leq f_{\mathfrak{q}}$ and $ \mathcal{N}_E,  \mathcal{N}_p$ are as in (\ref{condNM}).
	\end{proposition}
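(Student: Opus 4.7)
The plan is to analyse the Weierstrass model $E\colon Y^2=X(X-A)(X+B)$ prime by prime, partitioning the primes of $\mathcal{O}_{K^+}$ into three classes: primes in $S_{K^+,2}$, primes in $S_{K^+,r}$, and primes outside $S_{K^+}$. All of the arithmetic content lives in the last class, so I would treat it first.

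Fix $\mathfrak{p}\notin S_{K^+}$, so $\mathfrak{p}\nmid 2r$. By the factorisation \eqref{modlemma} and the pairwise coprimality of $\mathcal{Z}_A,\mathcal{Z}_B,\mathcal{Z}_C$, the prime $\mathfrak{p}$ divides at most one of $A,B,C$; combined with $A+B+C=0$, it divides exactly one whenever $\mathfrak{p}\mid ABC$. Assuming $\mathfrak{p}\mid A$ (the remaining sub-cases being symmetric), I would read off from \eqref{invariants} that
\[
v_\mathfrak{p}(c_4)=v_\mathfrak{p}(AB+BC+CA)=v_\mathfrak{p}(BC)=0,\qquad v_\mathfrak{p}(\Delta_E)=2v_\mathfrak{p}(A)=2p\,v_\mathfrak{p}(\mathcal{Z}_A)>0,
\]
using $\mathfrak{p}\nmid 2$ for the first equality. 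Since $v_\mathfrak{p}(c_4)=0$, the model is automatically minimal at $\mathfrak{p}$, the reduction is multiplicative (hence semistable, Kodaira type $I_n$), the conductor exponent equals $1$, and $p\mid v_\mathfrak{p}(\Delta_E)$. If instead $\mathfrak{p}\nmid ABC$, then $v_\mathfrak{p}(\Delta_E)=0$, so $E$ has good (in particular semistable) reduction at $\mathfrak{p}$, the model is trivially minimal, and $p\mid 0=v_\mathfrak{p}(\Delta_E)$. Together these sub-cases prove the unlabelled middle sentence of the proposition and show that primes outside $S_{K^+}$ contribute exactly $\prod_{\mathfrak{p}\mid ABC,\,\mathfrak{p}\notin S_{K^+}}\mathfrak{p}$ to $\mathcal{N}_E$.

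At primes $\mathfrak{P}\in S_{K^+,2}$ and $\mathfrak{q}\in S_{K^+,r}$ no further argument is required for the stated shape of $\mathcal{N}_E$: each contributes some non-negative exponent $e_\mathfrak{P}$ or $f_\mathfrak{q}$, finite by Ogg's formula, and no uniform bound on them is asserted at this stage. Assembling the three prime classes then yields the product formula for $\mathcal{N}_E$. The claim about $\mathcal{N}_p$ will follow immediately from the definition $\mathcal{N}_p=\mathcal{N}_E/\mathcal{M}_p$ in \eqref{condNM}: by the previous paragraph every prime $\mathfrak{p}\notin S_{K^+}$ dividing $\mathcal{N}_E$ satisfies both $\mathfrak{p}\|\mathcal{N}_E$ and $p\mid v_\mathfrak{p}(\Delta_E)$, and is therefore absorbed into $\mathcal{M}_p$ and stripped from $\mathcal{N}_p$; each prime in $S_{K^+,2}\cup S_{K^+,r}$ survives with exponent either the full $e_\mathfrak{P}$ (respectively $f_\mathfrak{q}$) or $0$, according to whether it lies in $\mathcal{M}_p$, and in both cases the surviving exponent lies in the claimed range. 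I do not expect any real obstacle here; the only genuinely delicate point, and really the whole reason for the construction, is the factor $p$ appearing in the exponents of $\mathcal{Z}_A,\mathcal{Z}_B,\mathcal{Z}_C$ in \eqref{modlemma}, which is precisely what guarantees $p\mid v_\mathfrak{p}(\Delta_E)$ at the multiplicative primes outside $S_{K^+}$ and so produces the dramatic cancellation on passing from $\mathcal{N}_E$ to $\mathcal{N}_p$.
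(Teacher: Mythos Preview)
Your proof is correct and follows essentially the same approach as the paper: both arguments isolate the primes $\mathfrak{p}\notin S_{K^+}$, use the coprimality of $A,B,C$ outside $S_{K^+,r}$ together with the invariants \eqref{invariants} to show $v_{\mathfrak{p}}(c_4)=0$ and $v_{\mathfrak{p}}(\Delta_E)=2p\,v_{\mathfrak{p}}(\mathcal{Z}_A\mathcal{Z}_B\mathcal{Z}_C)$, and then read off minimality, semistability, and $p\mid v_{\mathfrak{p}}(\Delta_E)$, leaving the $S_{K^+}$-contribution to $\mathcal{N}_E$ and $\mathcal{N}_p$ as a formality. Your write-up is slightly more explicit (the $A+B+C=0$ remark, the identification $v_{\mathfrak{p}}(AB+BC+CA)=v_{\mathfrak{p}}(BC)$, and the dichotomy for the surviving exponents in $\mathcal{N}_p$), but there is no substantive difference in strategy.
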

\begin{proof}
   It is enough to show that for $\fp \notin S_{K^+}$ the following holds
   \begin{enumerate}
       \item if $\fp \nmid ABC$, the model $E$ is minimal and has good reduction at $\fp$;
       \item if $\fp | ABC$ the model $E$ is minimal and has multiplicative reduction at $\fp$ and moreover $p|v_{\fp}(\Delta_E)$.
   \end{enumerate}
   Then, the result will follow from the definitions of $\mathcal{N}_E$ and $\mathcal{N}_p$ given in (\ref{condNM}).
   
   In order to prove (i) and (ii) we note that a prime $\fp \notin S_{K^+}$ we have that
   \begin{equation} \label{tra}
       v_{\fp}(\Delta_E)=2v_{\fp}(ABC)=2pv_{\fp}(\mathcal{Z}_A\mathcal{Z}_B\mathcal{Z}_C).
   \end{equation}
   If  $\fp \nmid ABC$, then $v_{\fp}(\Delta_E)=0$. Thus, the model is minimal and $E$ has good reduction at $\fp$. If $\fp | ABC$, then $\fp$ divides precisely one of $A$, $B$ and $C$ as they are coprime outside $S_{K^+,r} \subset S_{K^+}$. Hence, $v_{\fp}(c_4)=0$, giving a minimal model for $E$ with multiplicative reduction at $\fp$. By (\ref{tra}) we see that $p|v_{\fp}(\Delta_E)$.

\end{proof}

	\section{Theoretical Background}
	\subsection{Modularity}
	Let $K$ be a totally real field and $E$ an elliptic curve over $K$, we say that $E$ is \textit{modular} if there exists a Hilbert cuspidal eigenform $\mathfrak{f}$ over $K$ of parallel weight $2$, with rational Hecke
	eigenvalues, such that the Hasse–Weil L-function of $E$ is equal to the Hecke L-function of $\mathfrak{f}$. In particular, this implies that the mod $p$ Galois representations are isomorphic, which we denote by $\overline{\rho}_{E,p} \sim \overline{\rho}_{\mathfrak{f},p}$.

	We will use the following modularity theorem proved by Freitas, Hung, and Siksek in \cite{FHS}:
\begin{theorem} \label{modularity}
Let $K$ be a totally real field. There are at
most finitely many $\bar{K}$- isomorphism classes of non-modular elliptic curves $E$ over $K$. Moreover, if $K$ is real
quadratic, then all elliptic curves over $K$ are modular.
\end{theorem}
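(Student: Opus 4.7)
The plan is to deduce modularity via modularity lifting theorems at several small primes simultaneously, reducing any potential non-modular elliptic curve to a rational point on a modular curve of a controlled shape. For a single prime $p$, a modularity lifting theorem (in the style of Wiles--Taylor, Breuil--Conrad--Diamond--Taylor, and their extensions to totally real fields by Skinner--Wiles, Kisin, Gee, Barnet-Lamb--Gee--Geraghty, and Thorne) says: if $\bar\rho_{E,p}$ is modular, absolutely irreducible after restriction to $G_{K(\zeta_p)}$, and satisfies mild local conditions at primes above $p$, then $E$ itself is modular. The classical input making such hypotheses checkable is Langlands--Tunnell at $p=3$, which gives automorphy of $\bar\rho_{E,3}$ unconditionally (its image lands in a solvable subgroup of $\mathrm{GL}_2(\mathbb{F}_3)$), and analogous statements based on solvable base change for $p=5$ and $p=7$.

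My approach would be to combine the lifting theorems for $p\in\{3,5,7\}$ as follows: if $E/K$ is non-modular, then for each such $p$ the residual representation $\bar\rho_{E,p}$ must land in one of a short list of \emph{exceptional subgroups} of $\mathrm{GL}_2(\mathbb{F}_p)$ (reducible, or contained in the normalizer of a (non-)split Cartan, or of exceptional projective image), because otherwise one of the lifting theorems applies and yields modularity. For each prime $p\in\{3,5,7\}$ and each exceptional subgroup $H_p$, the elliptic curves $E/K$ whose mod $p$ image is contained in $H_p$ are classified by $K$-points on a modular curve $X(H_p)$. Thus a non-modular $E$ produces a $K$-point on the fiber product
\[
Y = X(H_3)\times_{X(1)} X(H_5)\times_{X(1)} X(H_7)
\]
for some choice of exceptional subgroups at each prime. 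The curves $Y$ so obtained all turn out to have geometric genus at least $2$ (this is a finite explicit check of a handful of modular curves), so Faltings' theorem yields finiteness of $Y(K)$ for any number field $K$, hence finiteness of non-modular $j$-invariants and finiteness of $\bar K$-isomorphism classes of non-modular $E/K$.

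For the real quadratic case, one has to upgrade finiteness to nonexistence. The plan is: enumerate the finite list of fiber products $Y$ relevant to totally real fields, and for each, explicitly determine all quadratic points. This relies on standard techniques (Mordell--Weil sieve, Chabauty, pullback to a rank-zero quotient, parametrization by a low-genus curve with known quadratic points), applied to the small list of curves $X_0(15), X_0(21), X_0(35),\ldots$ and their exceptional analogues. One checks case by case that every non-cuspidal quadratic point is either CM (hence modular by Shimura) or corresponds to an elliptic curve whose mod $p$ image at some $p\in\{3,5,7\}$ is, in fact, \emph{large enough} to apply the lifting theorem after all, contradicting non-modularity.

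The hard part is the last step: the systematic classification and computation of quadratic points on all the relevant modular curves (and showing that the residual exceptional hypotheses are genuinely incompatible with each other over real quadratic fields). In particular, proving that the sporadic quadratic points one does find are CM (and thus \emph{a priori} modular) requires a delicate explicit analysis. The input of Langlands--Tunnell and of the most recent Breuil--Mézard--style potential modularity results over totally real fields is also nontrivial; for example, the $p=5$ lifting theorem demands solvable base change and control of the local deformation ring at $5$, which is precisely the advance over $\mathbb{Q}$ that makes the three-prime combination work in the totally real setting.
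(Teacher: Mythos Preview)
The paper does not prove this theorem at all: it is quoted verbatim as a result of Freitas, Le~Hung and Siksek \cite{FHS} and used as a black box. So there is no ``paper's own proof'' to compare against.

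That said, your outline is essentially the strategy of \cite{FHS}. A few points of calibration: in their argument the crucial lifting input is Thorne's theorem (building on Kisin, Barnet-Lamb--Gee--Geraghty, etc.), which says that $E/K$ is modular provided $\bar\rho_{E,p}|_{G_{K(\zeta_p)}}$ is absolutely irreducible for some $p\in\{3,5,7\}$; residual modularity at these primes comes from Langlands--Tunnell at $p=3$ and from known modularity over $\mathbb{Q}$ (via a prime switch / moduli argument) at $p=5,7$, rather than from ``solvable base change'' as you suggest. The reduction of the non-modular locus then lands not on arbitrary triple fiber products over $X(1)$, but on an explicit short list of modular curves (essentially $X(\mathrm{b}5,\mathrm{b}7)$, $X(\mathrm{b}5,\mathrm{s}7)$, $X(\mathrm{s}5,\mathrm{b}7)$, $X(\mathrm{s}5,\mathrm{s}7)$ and a few degenerate variants), each of genus $\ge 2$, so Faltings gives the finiteness over any totally real $K$. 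For the real quadratic part, \cite{FHS} determine all quadratic points on these curves explicitly and check that none yields a non-modular elliptic curve; your description of this step is accurate in spirit. So your proposal is correct as a sketch of the cited proof, but be aware that in the present paper this theorem is an imported ingredient, not something reproved.
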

   Furthermore, Derickx, Najman, and Siksek have recently proved in \cite{DNS}:
\begin{theorem} \label{modularity2}
Let $K$ be a totally real cubic number field and $E$ be an elliptic curve over $K$. Then $E$ is
modular.
\end{theorem}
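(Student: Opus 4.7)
The proof follows the established template that produced Theorem \ref{modularity} above; adapting it from the real quadratic to the totally real cubic setting is the entire content of the theorem. The strategy has three ingredients: a modularity lifting theorem, residual modularity at small primes, and an analysis of the remaining exceptional elliptic curves.

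First I would invoke the modern modularity lifting theorems (Kisin, Breuil--Diamond--Thorne, Thorne, Barnet-Lamb--Geraghty--Harris--Taylor). These imply that an elliptic curve $E$ over a totally real field $K$ is modular whenever its residual representation $\overline{\rho}_{E,p}$ is modular and has sufficiently large image, for some $p \in \{3,5,7\}$. For $p=3$ residual modularity is furnished by the Langlands--Tunnell theorem, since $\mathrm{GL}_2(\mathbb{F}_3)$ is solvable. For $p=5$ and $p=7$ one applies Wiles's prime-switching trick: produce an auxiliary modular elliptic curve $E'/K$ whose mod $p$ representation is isomorphic to $\overline{\rho}_{E,p}$, and transfer modularity back via lifting.

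The obstruction is the set of elliptic curves $E/K$ for which this three-pronged argument fails simultaneously at all of $p=3,5,7$. Such an $E$ must have $\overline{\rho}_{E,p}$ reducible or of abnormally small image for each of these three primes, and standard representation-theoretic arguments force $E$ to correspond to a $K$-rational point on one of a short explicit list of modular curves; these are variants of $X_0(15)$, $X_0(21)$, $X_0(35)$ parametrising elliptic curves with the prescribed joint residual image structure (plus a finite list of exceptional cases from non-Borel small images).

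The hard part, and where the cubic case genuinely diverges from the real quadratic proof, is to show that every cubic point on each of these obstructing modular curves is either cuspidal, CM (hence automatically modular), or corresponds to an elliptic curve that can be treated by hand. Over quadratic fields Mazur's method on $X_0(N)$ together with elliptic Chabauty handled the enumeration; over cubic fields the Jacobians acquire new points and often positive rank, so one must work on the symmetric square $X_0(N)^{(2)}$ to parametrise the cubic points of $X_0(N)$ via the $j$-line degree map, and combine explicit Mordell--Weil computations with the Mordell--Weil sieve and symmetric-power Chabauty. Once one verifies that the resulting finite list of cubic points on $X_0(15)$, $X_0(21)$, $X_0(35)$ (and the auxiliary curves) consists only of cusps and CM points, the modularity of every elliptic curve over a totally real cubic field follows.
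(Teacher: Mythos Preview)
The paper does not prove this statement at all: it is quoted as a result of Derickx--Najman--Siksek \cite{DNS} and used as a black box, so there is no ``paper's own proof'' to compare against. Your sketch is broadly faithful to the strategy of that reference (modularity lifting at $p\in\{3,5,7\}$, then enumeration of low-degree points on a finite list of modular curves parametrising the simultaneously bad residual images), so in spirit you have the right outline.

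That said, two points in your sketch are off. First, cubic points on a curve $X$ are parametrised by the symmetric \emph{cube} $X^{(3)}$, not the symmetric square $X^{(2)}$; the latter governs quadratic points. Second, the curves $X_0(15)$, $X_0(21)$, $X_0(35)$ capture only the case where $\overline{\rho}_{E,p}$ is reducible (Borel) at two of the three primes, whereas the actual obstruction list must also include the split and non-split Cartan possibilities (e.g.\ curves of the type $X(\mathrm{s}3,\mathrm{b}5)$, $X(\mathrm{b}3,\mathrm{ns}7)$, and so on). These are not fatal to the overall plan, but as written your list of obstructing curves is incomplete and the symmetric-power indexing is wrong.
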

	
\begin{corollary}
    \label{modularity3}
Let $E_p$ be the Frey curve defined by \eqref{Frey2} (which has a dependency on $p$ as noted in \ref{note}). By construction, $E_p$ is defined over a totally real field which we denote by $K$. Then, there is some constant $A_K$ depending only on $K$, such that $E_p$ is modular whenever $p>A_K$.
\end{corollary}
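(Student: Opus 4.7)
The plan is to apply Theorem \ref{modularity} together with the explicit dependence of $j(E_p)$ on the ratio $y/x$. By Theorem \ref{modularity}, there are only finitely many $\bar{K}$-isomorphism classes of non-modular elliptic curves over $K$; let $\{j_1, \ldots, j_n\} \subset K$ be the corresponding $j$-invariants. From \eqref{invariants} together with the fact that each of $A, B, C$ is homogeneous of degree $2$ in $(x,y)$, the $j$-invariant $j(E_{(x,y)})$ is a rational function $J(t) \in K(t)$ of the single variable $t = y/x$; a direct inspection of the formulas shows that $J$ is non-constant. Non-modularity of $E_p$ would therefore force $t = y/x$ to lie in a finite subset $\{t_1, \ldots, t_m\} \subset K$ depending only on $K$.

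For each such $t_j$ I would analyse the possible primitive solutions $(x,y,z)$ with $y/x = t_j$. Writing the fractional ideal $(t_j) = \mathfrak{a}_j/\mathfrak{b}_j$ in coprime form, any coprime pair $x, y \in \cO_K$ with $y/x = t_j$ satisfies $(x) = \mathfrak{b}_j$ and $(y) = \mathfrak{a}_j$. Any $t_j$ for which these ideals are non-principal can be discarded, since no such pair then exists; in the remaining cases, after fixing generators $\beta_j, \alpha_j$, every admissible pair is of the form $(x,y) = (\epsilon \beta_j, \epsilon \alpha_j)$ for some unit $\epsilon \in \cO_K^\times$. Substituting into $x^r + y^r = z^p$ yields $z^p = \epsilon^r \gamma_j$, where $\gamma_j := \alpha_j^r + \beta_j^r$ is a fixed nonzero element of $\cO_K$ (nonzero by the non-triviality of the solution).

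Passing to ideals gives $(z)^p = (\gamma_j)$. If $\gamma_j$ is a unit, then $z$ is a unit and $\fP \nmid z$, contradicting the hypothesis $\fP \mid z$ built into the construction of $E_p$. Otherwise some prime $\fp$ has $v_{\fp}(\gamma_j) > 0$, and the divisibility $p \mid v_{\fp}(\gamma_j)$ forces $p \leq v_{\fp}(\gamma_j)$. Taking $A_K$ to be the maximum of these valuation bounds over the finitely many surviving $t_j$ (a quantity depending only on $K$), we conclude that $E_p$ is modular for all $p > A_K$. The main subtlety will be the ideal-theoretic bookkeeping when the class group of $K$ is nontrivial, though nothing deeper than discarding non-principal $t_j$ is required; the non-constancy of $J(t)$ is a routine verification from \eqref{invariants}.
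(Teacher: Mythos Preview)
Your argument is correct but follows a genuinely different route from the paper. The paper parametrizes the Frey curve not by $t=y/x$ but by the Legendre parameter $\lambda=-B/A$, for which $j(E)=2^{8}(\lambda^{2}-\lambda+1)^{3}\lambda^{-2}(\lambda-1)^{-2}$; non-modularity then forces $\lambda$ to equal one of finitely many fixed values $\lambda_k\in K$, and the already-established factorization \eqref{modlemma} yields $v_{\fP}(\lambda_k)=p\,v_{\fP}(\mathcal{Z}_B/\mathcal{Z}_A)$ with $\fP\mid\mathcal{Z}_A$ and $\fP\nmid\mathcal{Z}_B$, so $p$ divides the fixed nonzero integer $v_{\fP}(\lambda_k)$. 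This sidesteps all class-group bookkeeping and the separate ``$\gamma_j$ a unit'' case, since the hypothesis $\fP\mid z$ is absorbed directly into $v_{\fP}(\lambda_k)\neq 0$. Your approach via $t=y/x$ and $\gamma_j=\alpha_j^r+\beta_j^r$ is sound but heavier: it returns all the way to the equation $x^r+y^r=z^p$ rather than using the structure of $A,B,C$ already developed; on the other hand it is more portable, since it would apply verbatim to any Frey construction with coefficients homogeneous in $(x,y)$. One small point to tidy: you should dispose explicitly of the degenerate case $x=0$ (where $t$ is undefined); there $y^r=z^p$ with $y,z$ coprime forces $z\in\cO_K^{\times}$, contradicting $\fP\mid z$, so this case does not arise.
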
 
\begin{proof}
By Theorem \ref{modularity}, there are at most finitely many possible $\bar{K}$-isomorphism classes of elliptic curves over $K$ which are not modular. Let $E$ be the elliptic curve defined in (\ref{Frey2}). Let $j_1, j_2, \dots, j_n \in K$ be the $j$-invariants of these classes. Define $\lambda := -B/A$. The $j$-invariant of $E$ is
$$j(\lambda)=2^8(\lambda^2-\lambda+1)^3\lambda^{-2}(\lambda-1)^{-2}
.$$

Each equation $j(\lambda)=j_i$ has at most six solutions $\lambda \in K$. Thus there are values $\lambda_1,\dots,\lambda_m \in K\: (\text{where }m\leq 6n)$ such that if $\lambda \neq \lambda_k$ for all $k$, then the elliptic curve $E$ with $j$-invariant $j(\lambda)$ is modular.

If $\lambda = \lambda_k$ then $-B/A = \lambda_k$. Hence, as ideals 
\begin{equation}\label{1}
(-B/A)\cO_{K^+} = (\lambda_k) \cO_{K^+}    
\end{equation}

 By (\ref{modlemma}) we get that \begin{equation}\label{2}
(-B/A)\cO_{K^+} = (\mathcal{Z})^p  \prod_{\substack{\mathfrak{p}_r\in S_{K^+,r}}}\mathfrak{p}_r^{l_{\mathfrak{p}_r}}
\end{equation}
where $\mathcal{Z}$ is a prime-to-$S_{K^+,r}$-ideal, with $\fP|\mathcal{Z}$ and integer exponents $l_{\mathfrak{p}_r}:=b_{\fp_r}-a_{\fp_r}$. Then by \eqref{1} and \eqref{2}, $v_{\fP}(\lambda_k)=pv_{\fP}(\mathcal{Z})>0.$ Thus $p|v_{\fP}(\lambda_k)$. As $\lambda_k$ is fixed, it gives a lower bound on $p$ for each $k$, and by taking the maximum of these bounds we get $A_K$.
\end{proof}	
\begin{remark}
The constant $A_K$ is ineffective as the finiteness of Theorem \ref{modularity} relies on Falting's Theorem (which is ineffective). See \cite{FHS} for more details.
Note that if $K$ is quadratic or cubic we get $A_K=0$ (by the last part of Theorem \ref{modularity} and Theorem \ref{modularity2}).
\end{remark}
\subsection{Irreducibility of\texorpdfstring{$\mod{p}$}{TEXT} representations of elliptic curves }\label{irreducibility}
We need the following theorem in the level lowering step of our proof. This was proved in \cite[Theorem 2]{SN1} and it is derived from the work of David and Momose who in turn built on Merel's Uniform Boundedness Theorem.
\begin{theorem} \label{irred}
	Let $K$ be a Galois totally real field. There is an effective constant $C_K$, depending only on $K$, such that the following holds. If $p>C_K$ is prime, and $E$
	is an elliptic curve over $K$ which has multiplicative reduction at all $\mathfrak{q}|p$, then $\overline{\rho}_{E,p}$ is irreducible.
\end{theorem}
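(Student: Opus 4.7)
The plan is to argue by contradiction: assuming $\overline{\rho}_{E,p}$ is reducible produces an isogeny character $\overline{\theta}\colon G_K \to \mathbb{F}_p^{\times}$ such that
$$\overline{\rho}_{E,p} \sim \begin{pmatrix} \overline{\theta} & * \\ 0 & \overline{\chi}_p\,\overline{\theta}^{-1}\end{pmatrix},$$
where $\overline{\chi}_p$ denotes the mod-$p$ cyclotomic character. The strategy is to use the multiplicative reduction hypothesis at primes above $p$ to pin down $\overline{\theta}|_{I_\mathfrak{q}}$ there, bound the ramification of $\overline{\theta}$ globally via class field theory, and finally invoke the bounds of David and Momose refining Merel's uniform boundedness theorem.

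First I would carry out the local analysis. At each prime $\mathfrak{q} \mid p$, the multiplicative reduction hypothesis together with Tate's uniformization gives $\overline{\rho}_{E,p}|_{I_\mathfrak{q}} \sim \bigl(\begin{smallmatrix}\overline{\chi}_p & *\\ 0 & 1\end{smallmatrix}\bigr)$, so $\overline{\theta}|_{I_\mathfrak{q}}$ is either trivial or equal to $\overline{\chi}_p|_{I_\mathfrak{q}}$. At primes $\mathfrak{q} \nmid p$, the image of inertia under $\overline{\rho}_{E,p}$ has bounded order (dividing $24$ by the Kodaira classification of reduction types), so some fixed power $\overline{\theta}^{M}$, with $M$ independent of $p$ and of $E$, is unramified at every such $\mathfrak{q}$.

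Next I would invoke class field theory. The character $\overline{\theta}^{M}$ is unramified outside $p$ and its inertial restrictions at primes above $p$ lie in an explicit finite set, so it factors through a quotient of a fixed ray class group of $K$ of conductor supported at $p$. For $p$ exceeding an effective constant depending only on the degree, class number, and discriminant of $K$, raising to a further $h_K$-th power forces $\overline{\theta}^{M h_K}$ to agree globally with a fixed power of $\overline{\chi}_p$, so the isogeny character is of one of finitely many pre-determined types. The final step is the David–Momose theorem, which asserts that over a fixed Galois totally real field $K$ the primes $p$ for which a non-CM elliptic curve over $K$ admits a $K$-rational cyclic $p$-isogeny of any of these types are effectively bounded; the multiplicative reduction hypothesis at every $\mathfrak{q} \mid p$ rules out the CM case, since a CM elliptic curve has potentially good reduction everywhere.

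The hard part will be this last step: the David–Momose bound requires an effective analysis of the $K$-rational points of the modular curve $X_0(p)$, uniform in $p$, and this is exactly where the assumption that $K$ be Galois and totally real enters essentially. The preceding local-to-global reduction is more routine in spirit, but one must run the isogeny character argument uniformly in $E$ in order to obtain a constant $C_K$ depending only on $K$ and not on the particular Frey curve under consideration.
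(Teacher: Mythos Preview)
The paper does not prove this statement at all; it simply records it as a quotation of \cite[Theorem~2]{SN1} (Freitas--Siksek), noting that the result there is derived from the work of David and Momose, who in turn build on Merel's Uniform Boundedness Theorem. There is therefore nothing in the present paper to compare your proposal against.

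What you have written is a reasonable outline of the argument that actually appears in the cited reference: the reduction to an isogeny character $\overline{\theta}$, the local determination of $\overline{\theta}|_{I_{\mathfrak{q}}}$ at $\mathfrak{q}\mid p$ via Tate uniformisation, the bounded inertia away from $p$, the class-field-theoretic globalisation after raising to a power depending only on $K$, and the final appeal to Momose's classification with David's effective refinements are indeed the main ingredients. One minor point of phrasing: the ``David--Momose'' input is not really a statement about $K$-rational points on $X_0(p)$, but rather a classification of the possible isogeny characters into finitely many types, each of which (over a Galois totally real $K$) yields an effective bound on $p$; the $X_0(p)$ viewpoint is closer to Merel's ineffective contribution. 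Your observation that the multiplicative-reduction hypothesis excludes CM (since CM curves have everywhere potentially good reduction) is correct and is exactly how that case is disposed of in \cite{SN1}.
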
 
\begin{remark}
	The above theorem is also true for any totally real field by replacing $K$ by its Galois closure.
\end{remark}
	\subsection{Level Lowering}
	We present a level lowering result proved by Freitas and Siksek in \cite{SN} derived from the work of Fujira \cite{K}, Jarvis \cite{J}, and Rajaei \cite{R}.
	\begin{theorem} \label{ll}
		
		Let $K$ be a totally real field and $E/K$ be an elliptic curve of conductor $\mathcal{N}_E$, suppose the following statements hold:
		\begin{enumerate}
			\item $p \geq 5$, the ramification index $e(\mathfrak{q}/p)<p-1$ for all $\mathfrak{q}|p$, and $\mathbb{Q}(\zeta_p)^+\nsubseteq K$,
			\item E is modular,
			\item $\overline{\rho}_{E,p}$ is irreducible,
			\item $E$ is semistable at all $\mathfrak{q}|p$,
			\item $p|v_{\mathfrak{q}}(\Delta_{\mathfrak{q}}) $ for all $\mathfrak{q}|p$.
		\end{enumerate}
		Then, there is a Hilbert eigenform $\mathfrak{f}$ of parallel weight $2$ that is new at level $\mathcal{N}_p$ (as defined in \ref{condNM}) and
		some prime $\varpi$ of $\mathbb{Q}_{\mathfrak{f}}$ such that $\varpi|p$ and $\overline{\rho}_{E,p} \sim \overline{\rho}_{\mathfrak{f},\varpi}$.
	\end{theorem}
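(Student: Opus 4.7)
The plan is to start from modularity and then strip the level by applying classical Hilbert-modular level-lowering theorems one prime at a time. By hypothesis (ii) there is a Hilbert eigenform $\mathfrak{g}$ of parallel weight $2$ and level $\mathcal{N}_E$ such that $\overline{\rho}_{E,p} \sim \overline{\rho}_{\mathfrak{g},\varpi_0}$ for some prime $\varpi_0 \mid p$ of $\mathbb{Q}_{\mathfrak{g}}$. Recalling the definition \eqref{condNM}, the set of primes we must remove from $\mathcal{N}_E$ to reach $\mathcal{N}_p$ is exactly $\{\mathfrak{q} : \mathfrak{q} \| \mathcal{N}_E,\ p \mid v_{\mathfrak{q}}(\Delta_{\mathfrak{q}})\}$. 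I would partition this set into primes with $\mathfrak{q}\nmid p$ and primes with $\mathfrak{q}\mid p$, and handle each group separately.

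For a prime $\mathfrak{q} \| \mathcal{N}_E$ with $\mathfrak{q}\nmid p$, the curve $E$ has multiplicative reduction at $\mathfrak{q}$ and the Tate uniformization together with $p\mid v_{\mathfrak{q}}(\Delta_{\mathfrak{q}})$ implies that $\overline{\rho}_{E,p}$ is unramified at $\mathfrak{q}$. The irreducibility hypothesis (iii) then lets one invoke the generalization of Mazur's principle to totally real fields due to Jarvis \cite{J}, which produces a newform at the strictly smaller level $\mathcal{N}_E/\mathfrak{q}$ whose associated mod-$p$ representation is still isomorphic to $\overline{\rho}_{E,p}$. Iterating removes all such $\mathfrak{q}\nmid p$.

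The harder half is the primes $\mathfrak{q}\mid p$ dividing $\mathcal{N}_E$ (equivalently, those at which $E$ has multiplicative reduction by hypothesis (iv)). Here $p\mid v_{\mathfrak{q}}(\Delta_{\mathfrak{q}})$ from hypothesis (v) is precisely the condition under which the Tate module becomes finite flat (Barsotti–Tate) at $\mathfrak{q}$ after the appropriate twist, so $\overline{\rho}_{E,p}|_{G_{K_{\mathfrak{q}}}}$ extends to a finite flat group scheme. The bound $e(\mathfrak{q}/p) < p-1$ from hypothesis (i) ensures that Fontaine–Laffaille / Breuil-module theory is available and that the level-lowering results of Fujiwara \cite{K} and Rajaei \cite{R} apply, removing $\mathfrak{q}$ from the level. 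The further condition $\mathbb{Q}(\zeta_p)^+ \not\subseteq K$ rules out the exceptional congruences that would obstruct Ribet-style level-lowering. Chaining these Fujiwara–Rajaei steps for each $\mathfrak{q}\mid p$ in the removable set produces a newform $\mathfrak{f}$ at level $\mathcal{N}_p$ and a prime $\varpi\mid p$ of $\mathbb{Q}_{\mathfrak{f}}$ with $\overline{\rho}_{E,p}\sim \overline{\rho}_{\mathfrak{f},\varpi}$, as required.

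The main obstacle is the second group: the level-lowering at primes above $p$ is considerably more delicate than the Mazur-principle step, and its validity is exactly what the technical assumptions in (i), (iv), (v) are tailored to guarantee. Once these hypotheses are in place, however, the proof is essentially an assembly of the Jarvis–Fujiwara–Rajaei machinery, as carried out in \cite{SN}.
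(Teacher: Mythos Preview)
Your sketch is correct in spirit and is essentially an expansion of what the paper does: the paper gives no argument at all, simply citing \cite[Corollary 2.2]{SN}, where Freitas and Siksek assemble exactly the Jarvis/Fujiwara/Rajaei level-lowering results you describe. One technical point you glossed over (and which is handled in \cite{SN}) is the parity obstruction for Hilbert-modular level-lowering: when $[K:\mathbb{Q}]$ is even and there is no Steinberg prime, Jarvis/Rajaei do not apply directly, and one must first perform a level-\emph{raising} step to introduce an auxiliary special prime before stripping the level; the hypothesis $\mathbb{Q}(\zeta_p)^+\nsubseteq K$ is used precisely to guarantee such an auxiliary prime exists. With that caveat, your outline matches the cited argument.
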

	\begin{proof}
		See \cite[Corollary 2.2]{SN}.
	\end{proof}
	\subsection{Image of Inertia}
	We gather information about the images of inertia $\rep_{E,p}(I_{\mathfrak{q}})$. This is a crucial step in controlling the behaviour at the primes in $S_{K^+}$ of the newform obtained by level lowering.
	\begin{lemma} \label{lemma}
		Let $E$ be an elliptic curve over $K$ with $j$-invariant $j_E$. Let $p\geq 5$ and
		let $\mathfrak{q} \nmid p$ be a prime of $K$. Then $p | \# \rep_{E,p}(I_{\mathfrak{q}})$ if and only if $E$ has potentially
		multiplicative reduction at $\mathfrak{q}$ (i.e. $v_{\mathfrak{q}}(j_E)<0$) and $p \nmid v_{\mathfrak{q}}(j_E)$.
	\end{lemma}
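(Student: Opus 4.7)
The plan is to split into cases according to the reduction type of $E$ at $\mathfrak{q}$, using the semistable reduction theorem to translate reduction type into the sign of $v_{\mathfrak{q}}(j_E)$. Since $\mathfrak{q}\nmid p$, the action of $I_{\mathfrak{q}}$ on $E[p]$ is tame, so it factors through a finite quotient, and the two cases are: potentially good reduction ($v_{\mathfrak{q}}(j_E)\geq 0$) and potentially multiplicative reduction ($v_{\mathfrak{q}}(j_E)<0$).

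First, I would dispatch the potentially good case. Here there exists a finite extension $L/K_{\mathfrak{q}}$ over which $E$ acquires good reduction, so $I_L$ acts trivially on $E[p]$ and $\rep_{E,p}(I_{\mathfrak{q}})$ is a quotient of the (finite) inertia of $L/K_{\mathfrak{q}}$. By the standard classification (e.g.\ Serre--Tate, or Silverman AEC), the order of this quotient divides $24$, and since $p\geq 5$ is coprime to $24$, we conclude $p\nmid \#\rep_{E,p}(I_{\mathfrak{q}})$. This gives one direction: if $v_{\mathfrak{q}}(j_E)\geq 0$ then $p\nmid \#\rep_{E,p}(I_{\mathfrak{q}})$.

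Second, I would treat the potentially multiplicative case via the theory of the Tate curve. After at most a quadratic twist (which does not affect $v_{\mathfrak{q}}(j_E)$ and does not change the order of $\rep_{E,p}(I_{\mathfrak{q}})$ since $p$ is odd), we may assume $E$ is a Tate curve over $K_{\mathfrak{q}}$ with parameter $q\in K_{\mathfrak{q}}^{*}$ satisfying $v_{\mathfrak{q}}(q)=-v_{\mathfrak{q}}(j_E)>0$. Then $E(\overline{K_{\mathfrak{q}}})\cong \overline{K_{\mathfrak{q}}}^{*}/q^{\mathbb{Z}}$, and $E[p]$ is generated by $\zeta_p$ and any $p$-th root $q^{1/p}$. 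Inertia acts trivially on $\zeta_p$ (since $\mathfrak{q}\nmid p$) and permutes the $p$-th roots of $q$; the image has order $p$ exactly when $q^{1/p}\notin K_{\mathfrak{q}}^{\mathrm{nr}}$, i.e.\ exactly when $p\nmid v_{\mathfrak{q}}(q)$, i.e.\ exactly when $p\nmid v_{\mathfrak{q}}(j_E)$. Otherwise the image is a subgroup of $(\mathbb{Z}/p)^{*}$, which has order prime to $p$.

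The main obstacle is the quadratic twist reduction in the potentially multiplicative case: one must verify that twisting by a ramified quadratic character $\chi$ multiplies $\rep_{E,p}$ by $\chi$ and therefore changes the image of inertia only by an order-$2$ factor, so that divisibility of its order by the odd prime $p$ is preserved. Once this is in place, everything else is a direct appeal to the Tate uniformization and to the finiteness-of-order bounds in the potentially good case, both of which are classical and require no new computation.
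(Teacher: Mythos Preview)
The paper does not actually prove this lemma; it simply cites \cite[Lemma 3.4]{SN}. Your argument is precisely the standard one behind that reference: the dichotomy on the sign of $v_{\mathfrak{q}}(j_E)$, the bound $\#\overline{\rho}_{E,p}(I_{\mathfrak{q}})\mid 24$ in the potentially good case (via Serre--Tate and the size of $\operatorname{Aut}(\tilde{E})$), and Tate uniformization together with the quadratic-twist reduction in the potentially multiplicative case. So you have correctly supplied the details the paper omits.

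Two minor imprecisions worth noting, neither of which is a genuine gap. First, saying ``the action of $I_{\mathfrak{q}}$ on $E[p]$ is tame'' is not literally correct when the residue characteristic is $2$ or $3$ in the potentially good case (wild inertia can act), but your subsequent bound of $24$ already absorbs any wild contribution and is all that is needed since $p\geq 5$. Second, in the Tate-curve case with $p\mid v_{\mathfrak{q}}(q)$ the inertia image is in fact trivial, not merely ``a subgroup of $(\mathbb{Z}/p\mathbb{Z})^*$'': the cyclotomic character is unramified at $\mathfrak{q}\nmid p$, so there is no diagonal contribution either. Either way the order is prime to $p$, so your conclusion is unaffected.
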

	\begin{proof}
		See \cite[Lemma 3.4]{SN}.
	\end{proof}

For the rest of this section let $(x,y,z)\in \cO_{K^+}^3$ be a non-trivial, primitive solution to \eqref{rrp} and ${\mathfrak{P}} \in S_{K^+,2}$ with $\fP|z$. We define the Frey curve 
 $E$ as in (\ref{Frey2}).

\begin{lemma}\label{invariantslemma}
	Let $E$, $(x,y,z)$ and ${\mathfrak{P}} $ as above. Then 
	\[ \qquad v_{\fP}(j_E)= 8v_{\fP}(2)-2pv_{\fP}(z).
	  \]
\end{lemma}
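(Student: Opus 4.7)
The plan is a direct valuation computation starting from the explicit $j$-invariant formula in (\ref{invariants}), namely
$$v_{\fP}(j_E) \;=\; 8\,v_{\fP}(2) \;+\; 3\,v_{\fP}(AB+BC+AC) \;-\; 2\,v_{\fP}(ABC).$$
The task therefore reduces to computing the two rightmost valuations in terms of $v_{\fP}(z)$.

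First I would establish that $\fP \mid A$ while $\fP \nmid B$ and $\fP \nmid C$. By the choice of $k_1$ in Section \ref{CFEC}, we have $\fP \mid f_{k_1}$ and $\fP \nmid f_{k_2}, f_{k_3}$; the scalars $\alpha, \beta, \gamma$ of (\ref{abc2}) are $\fP$-units since their only prime divisors lie above $r$ (as shown in the proof of the lemma containing (\ref{modlemma})) while $\fP$ lies above $2$. Equivalently, this is encoded in (\ref{modlemma}) together with the coprimality of $\mathcal{Z}_A, \mathcal{Z}_B, \mathcal{Z}_C$.

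Next, using $A+B+C=0$, substituting $C = -(A+B)$ yields the algebraic identity
$$AB+BC+AC \;=\; -(A^2+AB+B^2).$$
Since $v_{\fP}(A) > 0$ while $v_{\fP}(B) = 0$, the two terms $A^2$ and $AB$ have positive $\fP$-valuation whereas $B^2$ has valuation zero; thus $v_{\fP}(A^2+AB+B^2) = 0$, and the main identity collapses to
$$v_{\fP}(j_E) \;=\; 8\,v_{\fP}(2) \;-\; 2\,v_{\fP}(A).$$

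It then remains to show $v_{\fP}(A) = v_{\fP}(f_{k_1}) = p\,v_{\fP}(z)$. This follows by applying $v_{\fP}$ to the ideal identity $(x+y)\prod_{k=1}^{(r-1)/2} f_k(x,y) = z^p$ from the proof of Proposition \ref{fkk}, together with the coprimality of these factors outside $S_{K^+,r}$: only $f_{k_1}$ has positive $\fP$-valuation among $x+y, f_1, \ldots, f_{(r-1)/2}$, so the full $\fP$-content of $z^p$ is absorbed into $f_{k_1}$. The calculation is entirely routine; the one point to take care of is this concentration-at-$f_{k_1}$ step, which is exactly the content of the coprimality statement in Proposition \ref{fkk} combined with $\fP \notin S_{K^+, r}$.
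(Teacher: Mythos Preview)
Your proposal is correct and follows essentially the same route as the paper: show $\fP\mid A$ and $\fP\nmid B,C$ (using that $\alpha,\beta,\gamma$ are $S_{K^+,r}$-units and the coprimality of the $f_k$), deduce $v_{\fP}(j_E)=8v_{\fP}(2)-2v_{\fP}(A)$, and then compute $v_{\fP}(A)=v_{\fP}(f_{k_1})=p\,v_{\fP}(z)$ from the factorisation $(x+y)\prod_k f_k=z^p$. Your use of the identity $AB+BC+CA=-(A^2+AB+B^2)$ to justify $v_{\fP}(AB+BC+CA)=0$ makes explicit a step the paper leaves implicit, but the argument is otherwise identical.
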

\begin{proof}
    Firstly, note that $\alpha, \beta, \gamma \in \mathcal{O}_{S_{K^+,r}}^*$, i.e. their factorisation contains only primes above $r$. Moreover, $A,B,C$ are coprime outside $S_{K^+,r}$ (by Proposition \ref{propfact} and Proposition \ref{fkk}). By the definition of $A$, we see that $\fP|A$, hence $\fP\nmid B,C$. Thus, by (\ref{invariants}) $ v_{\fP}(j_E)= 8v_{\fP}(2)-2v_{\fP}(A)$. We note that $v_{\fP}(A)=v_{\fP}(f_{k_1}(x,y))=pv_{\fP}(z)$, hence completing the proof.
    
\end{proof}

	\begin{lemma} \label{imgi}
		Let $E$, $(x,y,z)$ and ${\mathfrak{P}} $ as above and the prime exponent $p>4v_{\mathfrak{P}}(2)$.Then $E$ has potentially multiplicative reduction at ${\mathfrak{P}}$ and $p | \# \rep_{E,p}(I_{\mathfrak{P}})$.
	\end{lemma}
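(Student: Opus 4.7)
The plan is to reduce everything to an application of Lemma \ref{lemma} via the valuation formula for $j_E$ given in Lemma \ref{invariantslemma}. The argument is short, so the only question is verifying that both hypotheses of Lemma \ref{lemma} are satisfied.

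First, I would invoke Lemma \ref{invariantslemma} to write
\[
v_{\fP}(j_E) = 8v_{\fP}(2) - 2p\,v_{\fP}(z).
\]
Since $\fP\mid z$ we have $v_{\fP}(z)\geq 1$, and the standing hypothesis $p > 4v_{\fP}(2)$ yields $2p\,v_{\fP}(z) \geq 2p > 8v_{\fP}(2)$. Hence $v_{\fP}(j_E) < 0$, which is exactly the statement that $E$ has potentially multiplicative reduction at $\fP$. This gives the first half of the lemma and the first hypothesis needed for Lemma \ref{lemma}.

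For the second half, I need $p\nmid v_{\fP}(j_E)$. Reducing the formula modulo $p$ gives $v_{\fP}(j_E) \equiv 8v_{\fP}(2) \pmod{p}$, so it suffices to check $p\nmid 8v_{\fP}(2)$. Since $p\geq 5$ is odd, $p\nmid 8$; on the other hand, $\fP$ lies above $2$ so $v_{\fP}(2)\geq 1$, and the hypothesis $p > 4v_{\fP}(2)$ in particular forces $0 < v_{\fP}(2) < p$, so $p\nmid v_{\fP}(2)$. Therefore $p\nmid 8v_{\fP}(2)$, hence $p\nmid v_{\fP}(j_E)$. Applying Lemma \ref{lemma} (with $\mathfrak{q} = \fP$, valid since $\fP\in S_{K^+,2}$ and $p$ is odd so $\fP\nmid p$) gives $p\mid \#\rep_{E,p}(I_{\fP})$, which is the conclusion.

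There is no real obstacle here; the only thing to keep track of is the bookkeeping between the quantitative hypothesis $p > 4v_{\fP}(2)$ and the two non-strict conditions needed in Lemma \ref{lemma}, namely $v_{\fP}(j_E)<0$ and $p\nmid v_{\fP}(j_E)$. Both drop out of the single formula from Lemma \ref{invariantslemma} using $v_{\fP}(z)\geq 1$ and the fact that $p$ is odd.
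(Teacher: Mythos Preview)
Your proof is correct and follows exactly the same route as the paper's own argument: apply Lemma \ref{invariantslemma}, use $p>4v_{\fP}(2)$ together with $v_{\fP}(z)\geq 1$ to get $v_{\fP}(j_E)<0$ and $p\nmid v_{\fP}(j_E)$, then invoke Lemma \ref{lemma}. The only difference is that you spell out the verification of $p\nmid v_{\fP}(j_E)$ in full, whereas the paper dismisses it with ``clearly''.
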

	\begin{proof}
		Assume that $\mathfrak{P} \in S_{K^+,2}$ with $v_{\mathfrak{P}}(z)=k$. By Lemma \ref{invariantslemma} 
		and the fact that $p>4 v_{\mathfrak{P}}(2)$, it follows that $v_{\mathfrak{P}}(j)<0$   and clearly $p\nmid v_{\mathfrak{P}}(j_i)$.
		This implies that $E$ has potentially multiplicative reduction at $\mathfrak{P}$ and by Lemma \ref{lemma} we get $p | \# \rep_{E,p}(I_{\mathfrak{P}})$.
	\end{proof}
	
	\subsection{Eichler-Shimura}\label{ESconj}
	For totally real fields, modularity reads as follows.
	\begin{conjecture}[Eichler-Shimura]
		Let $K$ be a totally real field. Let $\mathfrak{f}$ be a Hilbert
		newform of level $\mathcal{N}$ and parallel weight $2$, with rational eigenvalues. Then there is
		an elliptic curve $E_{\mathfrak{f}}/K$ with conductor $\mathcal{N}$ having the same L-function as $\mathfrak{f}$.
	\end{conjecture}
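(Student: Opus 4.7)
The plan is to follow the standard construction of an abelian variety attached to a Hilbert newform and then isolate a one-dimensional factor using the rationality hypothesis on the Hecke eigenvalues. First, I would invoke the Galois representation attached to $\mathfrak{f}$: by the work of Carayol, Taylor, Blasius--Rogawski, and Wiles, to a Hilbert cuspidal eigenform of parallel weight $2$ over a totally real field $K$ one associates, for every prime $\ell$, a continuous Galois representation $\rho_{\mathfrak{f},\ell}:G_K \to \mathrm{GL}_2(\mathbb{Q}_{\mathfrak{f}}\otimes \mathbb{Q}_\ell)$ that is unramified outside $\mathcal{N}\ell$ and whose characteristic polynomial of Frobenius at a good prime $\mathfrak{q}$ matches the Hecke polynomial of $\mathfrak{f}$ at $\mathfrak{q}$. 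Under the hypothesis $\mathbb{Q}_{\mathfrak{f}}=\mathbb{Q}$, this is a compatible system of two-dimensional $\ell$-adic representations with rational traces.

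Next, I would try to realize this compatible system geometrically. If $[K:\mathbb{Q}]$ is odd, or if there is some finite place at which $\mathfrak{f}$ is special or supercuspidal, one may use the Jacquet--Langlands correspondence to transfer $\mathfrak{f}$ to an automorphic form on a suitable quaternionic Shimura curve $X$, and then cut out a factor $A_{\mathfrak{f}}$ of $\mathrm{Jac}(X)$ using the Hecke algebra. The resulting abelian variety $A_{\mathfrak{f}}/K$ is of $\mathrm{GL}_2$-type with endomorphism algebra containing $\mathbb{Q}_{\mathfrak{f}}$; since $\mathbb{Q}_{\mathfrak{f}}=\mathbb{Q}$ and $\dim A_{\mathfrak{f}}=[\mathbb{Q}_{\mathfrak{f}}:\mathbb{Q}]$, the variety $A_{\mathfrak{f}}$ is an elliptic curve. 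Comparing $\ell$-adic Tate modules shows that the $L$-function of $A_{\mathfrak{f}}$ equals that of $\mathfrak{f}$, and Carayol's local-global compatibility identifies the conductor of $A_{\mathfrak{f}}$ with $\mathcal{N}$.

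The hard part — and the reason the statement is recorded as a \emph{conjecture} rather than a theorem — occurs when $[K:\mathbb{Q}]$ is even and $\mathfrak{f}$ is \emph{everywhere} unramified principal series, so that no quaternion algebra over $K$ ramified at exactly one infinite place and one finite place is available to apply Jacquet--Langlands. In this case, no Shimura variety of PEL type parametrizing $A_{\mathfrak{f}}$ is known, and the Galois representation $\rho_{\mathfrak{f},\ell}$ is not a priori the $\ell$-adic Tate module of any abelian variety over $K$. To push through, one would need either a new geometric construction of $A_{\mathfrak{f}}$ (e.g.\ via congruences with forms that \emph{do} transfer to a Shimura curve, combined with a deformation-theoretic argument), or an unconditional modularity theorem in the reverse direction that lifts the compatible system to an elliptic curve. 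This remains the central obstacle; for the applications in the present paper, one therefore uses the conjecture only to rule out specific newforms $\mathfrak{f}$ that would give rise to putative elliptic curves with prescribed conductor and torsion, where the partial cases proved by Blasius, Hida--Tilouine, and others already suffice.
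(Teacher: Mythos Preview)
The paper does not prove this statement at all: it is recorded as a \emph{conjecture} and left unproved. Your write-up is therefore not to be compared against a ``paper's own proof'' --- there is none --- but it is worth assessing on its own terms.

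Your sketch of the known partial results is essentially accurate: when $[K:\mathbb{Q}]$ is odd, or when the automorphic representation $\pi_{\mathfrak f}$ has a special or supercuspidal local component at some finite place, Jacquet--Langlands transfer to a Shimura curve over $K$ produces the elliptic curve $E_{\mathfrak f}$, and the obstruction in the remaining case (even degree, principal series everywhere) is exactly the one you describe. So as an exposition of why this is a conjecture rather than a theorem, your proposal is sound.

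Where you drift from the paper is in the final sentence. The paper does \emph{not} ``use the conjecture only to rule out specific newforms''; rather, it bypasses the conjecture entirely by invoking Theorem~\ref{inertia} (attributed to Blasius, Darmon, and Zhang). That theorem says that if one already has an elliptic curve $E/K$ with $\overline{\rho}_{E,p}\sim\overline{\rho}_{\mathfrak f,\varpi}$, and $E$ has potentially multiplicative reduction at some prime $\mathfrak q\nmid p$ with $p\mid\#\overline{\rho}_{E,p}(I_{\mathfrak q})$ and $p\nmid(\mathrm{Norm}(\mathfrak q)\pm1)$, then $E_{\mathfrak f}$ exists. The point is that the potentially multiplicative hypothesis forces $\pi_{\mathfrak f}$ to be special at $\mathfrak q$, landing one squarely in the ``easy'' case of the conjecture that you already handled via Jacquet--Langlands. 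So the paper's strategy is to \emph{arrange} for the local hypothesis that makes Eichler--Shimura provable, not to assume the full conjecture.
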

	Freitas and Siksek \cite{SN} obtained the following theorem from the works of Blasius \cite{DB}, Darmon \cite{D} and Zhang \cite{Z}.
	
	\begin{theorem}\label{inertia}
		Let $E$ be an elliptic curve over a totally real field $K$, and $p$ be an odd prime. Suppose that $\overline{\rho}_{E,p}$
		is irreducible, and $\overline{\rho}_{E,p} \sim \overline{\rho}_{\mathfrak{f},\varpi}$ for some Hilbert newform $\mathfrak{f}$ over $K$ of level $\mathcal{N}$ and parallel weight $2$ which
		satisfies $\mathbb{Q}_{\mathfrak{f}} = \mathbb{Q}$. Let $\mathfrak{q}\nmid p$ be a prime ideal of $\mathcal{O}_K$ such that:
		\begin{enumerate}
			\item $E$ has potentially multiplicative reduction at $\mathfrak{q}$,
			\item $p| \# \rep_{E,p}(I_{\mathfrak{q}})$,
			\item $p \nmid (\text{Norm}_{K/\mathbb{Q}}(\mathfrak{q})\pm 1) $.
		\end{enumerate}
		Then there is an elliptic curve $E_{\mathfrak{f}}/K$ of conductor $\mathcal{N}$ with the same L-function as $\mathfrak{f}$.
	\end{theorem}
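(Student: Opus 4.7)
The plan is to use the constructions of Blasius, Darmon, and Zhang to attach to $\mathfrak{f}$ a $\mathrm{GL}_2$-type abelian variety $A_{\mathfrak{f}}/K$ with $\mathrm{End}_K^0(A_{\mathfrak{f}}) \supseteq \mathbb{Q}_{\mathfrak{f}} = \mathbb{Q}$, whose $\varpi$-adic Tate module realises $\rho_{\mathfrak{f},\varpi}$. A priori $\dim A_{\mathfrak{f}}$ may exceed $1$; the role of the three local hypotheses on $\mathfrak{q}$ is precisely to force $\dim A_{\mathfrak{f}} = 1$, and then $E_{\mathfrak{f}} := A_{\mathfrak{f}}$ will be the desired elliptic curve of conductor $\mathcal{N}$ with $L(E_{\mathfrak{f}}, s) = L(\mathfrak{f}, s)$.

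The first technical step is to transfer local information at $\mathfrak{q}$ from $E$ to $\mathfrak{f}$. By (2) and the isomorphism $\rep_{E,p} \sim \rep_{\mathfrak{f},\varpi}$, one has $p \mid \#\rep_{\mathfrak{f},\varpi}(I_{\mathfrak{q}})$. For $p$ large and $\mathfrak{q} \nmid p$, the residual image of inertia attached to a ramified principal-series or supercuspidal local component at $\mathfrak{q}$ is tamely ramified, and its order divides $\mathrm{Norm}(\mathfrak{q})^2 - 1 = (\mathrm{Norm}(\mathfrak{q}) - 1)(\mathrm{Norm}(\mathfrak{q}) + 1)$; hypothesis (3) rules these cases out. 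The only remaining possibility is that the local Weil--Deligne representation at $\mathfrak{q}$ has non-zero monodromy, i.e., the local component of $\mathfrak{f}$ at $\mathfrak{q}$ is an (unramified twist of) Steinberg, which by Grothendieck's $\ell$-adic monodromy theorem is exactly consistent with $p \mid \#\rep_{\mathfrak{f},\varpi}(I_{\mathfrak{q}})$. Consequently $A_{\mathfrak{f}}$ has (non-potentially) multiplicative reduction at $\mathfrak{q}$.

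Finally I would drop the dimension of $A_{\mathfrak{f}}$ via rigid-analytic uniformization. By Raynaud, at $\mathfrak{q}$ the abelian variety $A_{\mathfrak{f}}$ is uniformised as $T/\Lambda$, where $T$ is a split torus of rank equal to $\dim A_{\mathfrak{f}}$, in a manner compatible with the $\mathrm{End}_K^0$-action; this toric rank is matched by the dimension of the monodromy-non-fixed part of the associated Weil--Deligne representation. The Steinberg local factor of the $2$-dimensional $\rho_{\mathfrak{f},\varpi}$ contributes exactly one such dimension, forcing $\dim T = 1$ and therefore $\dim A_{\mathfrak{f}} = 1$. I expect the genuine obstacle to lie in the first step rather than the last: existence of $A_{\mathfrak{f}}$ with the prescribed Galois representation and endomorphism structure is the deep content of the Blasius--Darmon--Zhang work, relying on Shimura-curve Jacobians and Jacquet--Langlands transfer, whereas the local analysis at $\mathfrak{q}$ and the uniformisation argument are comparatively formal once $A_{\mathfrak{f}}$ is in hand.
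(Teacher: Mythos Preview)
The paper does not give a proof of this statement; it simply refers to \cite[Corollary~2.2]{SN}, where Freitas and Siksek deduce it from the work of Blasius, Darmon and Zhang. So there is no in-paper argument to compare against beyond that citation.

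Your outline has the right two ingredients --- use hypotheses (i)--(iii) to force the local component of $\mathfrak{f}$ at $\mathfrak{q}$ to be special (Steinberg), then invoke the deep construction --- but it misidentifies the obstruction and therefore inverts the logic. Since $\mathbb{Q}_{\mathfrak{f}}=\mathbb{Q}$ is already assumed, any $\mathrm{GL}_2$-type abelian variety $A_{\mathfrak{f}}/K$ with $\mathbb{Q}_{\mathfrak{f}}\subseteq\mathrm{End}_K^0(A_{\mathfrak{f}})$ realising $\rho_{\mathfrak{f}}$ would automatically have $\dim A_{\mathfrak{f}}=[\mathbb{Q}_{\mathfrak{f}}:\mathbb{Q}]=1$; there is no dimension to drop, and your rigid-analytic uniformisation step is beside the point. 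The genuine problem is \emph{existence} of $A_{\mathfrak{f}}$ over $K$ in the first place: when $[K:\mathbb{Q}]$ is even and $\mathcal{N}$ is not exactly divisible by any prime, the relevant quaternionic Shimura variety is zero-dimensional and one cannot carve $A_{\mathfrak{f}}$ out of a Jacobian. So you cannot ``construct $A_{\mathfrak{f}}$ and then cut it down''; rather, the Steinberg conclusion at $\mathfrak{q}$ is exactly the input that makes the Jacquet--Langlands transfer land on a Shimura \emph{curve}, after which Zhang's construction yields $E_{\mathfrak{f}}$ directly with $\dim=1$.

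Your local argument for Steinberg is in the right direction but should be phrased more carefully: from $p\mid\#\rep_{\mathfrak{f},\varpi}(I_{\mathfrak{q}})$ one gets $\mathfrak{q}\mid\mathcal{N}$; if $\mathfrak{q}^2\mid\mathcal{N}$ the local component is supercuspidal or a ramified principal series, and then $\rho_{\mathfrak{f},\varpi}|_{I_{\mathfrak{q}}}$ factors through a finite quotient whose order is bounded independently of $p$ (and, in the cases that arise in parallel weight~$2$ with trivial character, divides a product of $\mathrm{Norm}(\mathfrak{q})\pm 1$ and small constants), which (iii) excludes. Hence $\mathfrak{q}\,\|\,\mathcal{N}$, and the known cases of Eichler--Shimura apply.
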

\begin{proof}
	See \cite[Corollary 2.2]{SN}.
\end{proof}

	\section{Proof of Theorem \ref{main}}
	Firstly, we put together the first steps of the modularity approach to get the following.
	
	\begin{theorem}[Level Lowering and Eichler Shimura]\label{LLES}
		
		Fix $r \geq 5$ a rational prime. Let $K$ be a totally real number field and define $K^+:=K(\zeta_r+\zeta_r^{-1})$. Suppose there exists a distinguished prime $ {\mathfrak{P}} \in S_{K^+,2}$. Then there is a constant $B_{K,r}$
		depending only on $K$ such that the following hold. Suppose $(x,y,z) \in \cO_{K^+}^3$ is a non-trivial, primitive solution to $x^r+y^r=z^p$ with prime exponent $p>B_{K,r}$ such that $ {\fP}|z$. Write $E$
		for the Frey curve (\ref{Frey2}). Then, there exists an elliptic curve $E'$ over $K^+$ such that
		\begin{enumerate}
			\item the elliptic curve $E'$ has good reduction outside $S_{K^+}$;
			\item $\overline{\rho}_{E,p}\sim\overline{\rho}_{E',p}$;
			\item $\# E'(K^+)[2]=4$ ;
			\item $E'$ has potentially multiplicative reduction at $ \mathfrak{P}$ $(v_{ {\mathfrak{P}}}(j_E)<0)$ .
		\end{enumerate}
	\end{theorem}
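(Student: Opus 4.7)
The plan is to run the standard Freitas--Siksek level-lowering and Eichler--Shimura pipeline on the Frey curve $E$, using the distinguished prime $\mathfrak{P}$ to drive the inertia comparison. First, Corollary \ref{modularity3} makes $E$ modular for $p>A_{K^+}$. Proposition \ref{lemmane} shows that $E$ is semistable outside $S_{K^+}$ and satisfies $p\mid v_{\mathfrak{q}}(\Delta_E)$ there, so in particular $E$ has multiplicative reduction at every prime of $K^+$ above $p$ once $p\notin\{2,r\}$; Theorem \ref{irred} (applied via its remark to the Galois closure of $K^+$) then yields irreducibility of $\overline{\rho}_{E,p}$ beyond a further effective bound. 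All five hypotheses of Theorem \ref{ll} are now in place, provided $p$ also exceeds the bounds forcing $e(\mathfrak{q}/p)<p-1$ for $\mathfrak{q}\mid p$ and $\mathbb{Q}(\zeta_p)^+\not\subseteq K^+$. Level-lowering produces a Hilbert newform $\mathfrak{f}$ over $K^+$ of parallel weight $2$, level $\mathcal{N}_p$ (supported on $S_{K^+}$ by Proposition \ref{lemmane}), together with a prime $\varpi\mid p$ of $\mathbb{Q}_\mathfrak{f}$ satisfying $\overline{\rho}_{E,p}\sim\overline{\rho}_{\mathfrak{f},\varpi}$.

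To apply Theorem \ref{inertia} at $\mathfrak{P}$, Lemma \ref{imgi} gives potentially multiplicative reduction of $E$ at $\mathfrak{P}$ and $p\mid\#\overline{\rho}_{E,p}(I_{\mathfrak{P}})$, while $p\nmid \text{Norm}(\mathfrak{P})\pm 1$ is automatic for $p$ large. The theorem also requires $\mathbb{Q}_\mathfrak{f}=\mathbb{Q}$; this is the standard image-of-inertia argument, showing that the order-$p$ element of $\overline{\rho}_{\mathfrak{f},\varpi}(I_\mathfrak{P})$ obstructs nontrivial conjugation by $\operatorname{Gal}(\overline{\mathbb{Q}}/\mathbb{Q})$, so the whole Galois orbit of $\mathfrak{f}$ collapses to $\mathfrak{f}$ itself. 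Theorem \ref{inertia} then supplies an elliptic curve $E'/K^+$ of conductor $\mathcal{N}_p$ with the same $L$-function as $\mathfrak{f}$. Conditions (i), (ii) and (iv) are then immediate: the conductor $\mathcal{N}_p$ is supported on $S_{K^+}$; the equality $\mathbb{Q}_\mathfrak{f}=\mathbb{Q}$ promotes $\overline{\rho}_{\mathfrak{f},\varpi}\sim\overline{\rho}_{E,p}$ to $\overline{\rho}_{E',p}\sim\overline{\rho}_{E,p}$; and matching the inertia orders at $\mathfrak{P}$ through Lemma \ref{lemma} forces $E'$ to have potentially multiplicative reduction at $\mathfrak{P}$.

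The main obstacle is conclusion (iii), the full $2$-torsion of $E'$ over $K^+$. By the construction in (\ref{Frey2}) the Frey curve has $E[2]\subset E(K^+)$, and the plan is to transfer this property to some representative of the isogeny class corresponding to $\mathfrak{f}$. First one extracts reducibility of $\overline{\rho}_{E',2}$ from the congruence information on Fourier coefficients of $\mathfrak{f}$ inherited from $E$, so that $E'$ carries at least one $K^+$-rational $2$-torsion point; then, if the full $2$-torsion is not yet rational, one replaces $E'$ by a $2$-isogenous curve (still with conductor dividing $\mathcal{N}_p$ and potentially multiplicative reduction at $\mathfrak{P}$) and iterates. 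Since the isogeny class is finite, this terminates at a curve with $\#E'(K^+)[2]=4$. All thresholds (modularity, irreducibility, Eichler--Shimura and the $2$-descent step) are absorbed into a single constant $B_{K,r}$ depending only on $K$ and $r$.
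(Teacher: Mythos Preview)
Your overall architecture matches the paper's exactly---modularity via Corollary~\ref{modularity3}, irreducibility via Theorem~\ref{irred}, level lowering via Theorem~\ref{ll}, then Eichler--Shimura via Theorem~\ref{inertia} with the inertia input at $\mathfrak{P}$ supplied by Lemma~\ref{imgi}---and parts (i), (ii), (iv) are read off just as you describe. Two of your intermediate justifications, however, do not work as written.

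For the reduction to $\mathbb{Q}_{\mathfrak{f}}=\mathbb{Q}$: the presence of an order-$p$ element in $\overline{\rho}_{\mathfrak{f},\varpi}(I_{\mathfrak{P}})$ does not obstruct the $\operatorname{Gal}(\overline{\mathbb{Q}}/\mathbb{Q})$-action on the Hecke field. Every conjugate $\mathfrak{f}^{\sigma}$ is again a newform of the same level, and its mod-$\sigma(\varpi)$ representation has exactly the same inertial shape at $\mathfrak{P}$, so nothing ``collapses''. The argument the paper actually invokes (Mazur's, via \cite[Section~4]{BS} and \cite[Proposition~15.4.2]{HC2}) is of a different nature: by Proposition~\ref{lemmane} the possible levels $\mathcal{N}_p$ lie in a finite set independent of the solution, hence so do the candidate newforms; for each $\mathfrak{f}$ with $\mathbb{Q}_{\mathfrak{f}}\neq\mathbb{Q}$ one fixes a prime $\mathfrak{q}$ with $a_{\mathfrak{q}}(\mathfrak{f})\notin\mathbb{Z}$, and the congruence $a_{\mathfrak{q}}(E)\equiv a_{\mathfrak{q}}(\mathfrak{f})\pmod{\varpi}$ together with $a_{\mathfrak{q}}(E)\in\mathbb{Z}$ forces $p$ to divide a fixed nonzero rational integer. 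This is what gets absorbed into $B_{K,r}$.

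For part (iii), iterating $2$-isogenies inside a finite isogeny class need not terminate at a curve with full rational $2$-torsion: one can cycle indefinitely between two curves each having exactly one $K^+$-rational point of order~$2$, the quotient by that point and the dual isogeny simply swapping them. The paper does not descend inside the isogeny class; it again enlarges $B_{K,r}$, using that there are only finitely many possible $E'$ (Shafarevich, since $\mathcal{N}_p$ is supported on $S_{K^+}$ with bounded exponents) and, for each $E'$ whose isogeny class contains no curve with full $2$-torsion, comparing traces of Frobenius on $E[2]$ (trivial) and on $E'[2]$ through the mod-$p$ congruence to manufacture a fixed nonzero integer divisible by $p$ (this is the content of the reference to \cite[Section~IV-6]{Serre}). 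Only after these finitely many bounds have been absorbed into $B_{K,r}$ may one replace $E'$ by an isogenous curve with $\#E'(K^+)[2]=4$.
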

\begin{proof}
	The proof follows precisely as the one of Theorem 9 in \cite{SN}. We will include it here for completion.
	
		We first observe by Proposition \ref{lemmane} that $E$ has multiplicative reduction outside $S_{K^+}$. From Corollary \ref{modularity} it follows that $E$ is modular and by Theorem \ref{irred} that $\rep_{E,p}$ is irreducible. Applying Theorem \ref{ll} and Proposition \ref{lemmane} we see that $\rep_{E,p} \sim \rep_{\mathfrak{f},\varpi}$ for a Hilbert newform $\mathfrak{f}$ of level $\mathcal{N}_p$ and
		some prime $\varpi|p$ of $\mathbb{Q}_{\mathfrak{f}}$.
		Next we reduce to the case when $\mathbb{Q}_{\mathfrak{f}}=\mathbb{Q}$, after possibly enlarging $B_{K,r}$. This step uses standard ideas originally due to Mazur that can be found in \cite[Section 4]{BS}, \cite[Proposition 15.4.2]{HC2}, and so we omit the details.
		\par
		Next, we want to show that there is some elliptic curve $E'/K^+$ of conductor $\mathcal{N}_p$ having the same L-function as $\mathfrak{f}$. We apply
		Lemma \ref{imgi} and by possibly enlarging $B_{K,r}$ get that $E$ has potentially
		multiplicative reduction at ${\mathfrak{P}}$ and $p | \# \rep_{E,p}(I_{\mathfrak{P}})$. The existence of $E'$ follows from Theorem \ref{inertia} after possibly enlarging $B_{K,r}$ to ensure that $p \nmid (\text{Norm}_{K^+/\mathbb{Q}}(\tilde{\mathfrak{\mathfrak{P}}})\pm 1)$.
		By putting all the pieces together we can conclude that there is an elliptic curve $E'/K$ of conductor $\cN_p$ satisfying $\rep_{E,p} \sim \rep_{E',p}$. This proves (i) and (ii).
		
		To prove (iii) we use that $\rep_{E,p} \sim \rep_{E',p}$ for some $E'/K^+$ with conductor  $\mathcal{N}_{p}$. After enlarging $B_{K,r}$ by an effective amount, and possibly replacing $E'$ by an isogenous curve, we may assume that $E'$ has full $2$-torsion over $K^+$. This uses standard ideas which can be found, for example, in \cite[Section IV-6]{Serre}.
		
		Now let $j_{E'}$ be the $j$-invariant of $E'$. As we have already seen, Lemma \ref{imgi} implies $p | \# \rep_{E,p}(I_{\tilde{\mathfrak{P}}})$, hence $p | \# \rep_{E',p}(I_{\tilde{\mathfrak{P}}})$, thus by Lemma \ref{lemma} we get that $E'$ has potentially multiplicative reduction at $\tilde{\mathfrak{P}}$ and so $v_{\tilde{\mathfrak{P}}}(j_{E'})<0$.
	\end{proof}
	Now we are ready to prove the Theorem \ref{main}.
	\begin{proof}[Proof of Theorem \ref{main}]
	The proof follows precisely as the one of Theorem 3 in \cite{SN}. 
	The idea consists in associating to each non-trivial, primitive solution $(x,y,z)\in \cO_{K^+}^3$ with $\fP|z$ a Frey elliptic curve as described in Theorem \ref{LLES}, which we denote by $E$.
	Let $p>B_{K,r}$ as given in Theorem \ref{LLES}, then $E$ gives rise to an elliptic curve $E'/K^+$ with full two torsion, $v_{\fP}(j_{E'}) \in \mathcal{O}_{S_{K^+}}$ and $v_{\fP}(j_{E'}) < 0$.
	
	The condition on the $S_{K^+}$-units in our hypothesis gives $v_{\fP}(j') \geq 0$, a contradiction. This is proven by Freitas and Siksek in \cite[Theorem 3]{SN} by parametrizing all of the curves $E'$ with the above properties. In particular, the last line follows by the fact that any non-trivial, coprime solution $(x,y,z)\in \cO_{K}^3$ with $\fQ|z$ gives rise to a solution $(x,y,z)\in \cO_{K^+}^3$ with $\fP|z$.
	
	\end{proof}
	\section{Proof of Theorem \ref{main2}}
	Suppose that $K$ is a totally real number field and $r\geq 5$ is a fixed rational prime. By (i) $r$ is inert in $K$. Denote $K^+:=K(\z+\z^{-1})$ and let 
	\begin{equation}\label{pir}
	    \pi_r:=\z+\z^{-1}-2=-(1-\z)(1-\z^{-1}).
	\end{equation}
	Note that $\pi_r$ is the uniformizer of the unique prime above $r$ in $K^+$. By (ii) there is a unique prime $\fP \in S_{K^+,2}$ of ramification index $e:=e(\fP/2)$ and by (iii) $2 \nmid h_{K^+}^+$. Moreover, we have that the congruence $\pi_r 
\equiv \nu^2 \mod \mathfrak{P}^{(4e+1)}$ has no solutions in $\nu \in \cO_{K^+}/\fP^{(4e+1)}$ by (iv). 

We will show that these assumptions guarantee that any solution to an $S_{K^+}$-unit equation satisfies the hypothesis in Theorem \ref{main}, giving the desired conclusion.

	\begin{lemma} \label{square}
		Fix $r \geq 5 $ and $K$ a totally real number field satisfying (i)-(iv) as described above. 
 Consider the natural homomorphism
		\[ \varphi \colon \mathcal{O}_{S_{K^+,r}}^* \to (\mathcal{O}_{K^+}/\mathfrak{P}^{4e+1})^*.\]
		Then, $\Ker({\varphi}) \subseteq \mathcal({O}_{S_{K^+,r}}^*)^2$.
	\end{lemma}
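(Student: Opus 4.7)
The plan is to decompose any $\alpha \in \ker(\varphi)$ as $\alpha = u\pi_r^k$ with $u \in \cO_{K^+}^*$ and $k \in \mathbb{Z}$, and then show separately that $k$ is even and that $u$ is a global square in $\cO_{K^+}^*$. The decomposition is valid because condition (i), combined with the total ramification of $K(\z)/K$ at the prime above $r$ (inherited from $\mathbb{Q}(\z)/\mathbb{Q}$), forces a unique prime above $r$ in $K^+$, with uniformizer $\pi_r$ by \eqref{pir}.

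First I would distil the full content of (iii). The standard exact sequence
\[
1 \to \cO_{K^+}^* / \cO_{K^+}^{*,+} \to \{\pm 1\}^{[K^+:\mathbb{Q}]} \to \operatorname{Cl}^+(K^+) \to \operatorname{Cl}(K^+) \to 1
\]
together with $2 \nmid h_{K^+}^+$ yields three consequences that I expect to use repeatedly: $h_{K^+}$ is odd, the signature map on units is surjective, and (by comparing $\mathbb{F}_2$-dimensions of $\cO_{K^+}^*/(\cO_{K^+}^*)^2$ and $\cO_{K^+}^{*,+}/(\cO_{K^+}^*)^2$) every totally positive unit of $\cO_{K^+}$ is a global square.

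To show $k$ is even, I would argue by contradiction. If $k$ is odd, then since $k - 1$ is even, the congruence $\alpha \equiv 1 \pmod{\fP^{4e+1}}$ rearranges to
\[
u\,\pi_r \equiv \bigl(\pi_r^{-(k-1)/2}\bigr)^2 \pmod{\fP^{4e+1}},
\]
exhibiting $u\,\pi_r$ as a square mod $\fP^{4e+1}$. Using the third consequence of (iii), write $u = \epsilon \cdot v^2$ with $v \in \cO_{K^+}^*$ and $\epsilon$ a chosen representative of the sign class of $u$. The task then reduces to showing that no such sign unit $\epsilon$ has the same square-class as $\pi_r$ in $(\cO_{K^+}/\fP^{4e+1})^*$, which combined with (iv) gives the contradiction. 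Once $k = 2m$ is even, the same congruence gives $u \equiv (\pi_r^{-m})^2 \pmod{\fP^{4e+1}}$; a parallel sign analysis — using that $\pi_r$ is totally negative, which is evident from the defining formula $\pi_r = -(1-\z)(1-\z^{-1})$ — forces $u$ totally positive, hence a square by (iii).

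The main obstacle is the sign-unit/square-class step: bridging the finite-prime non-squareness condition (iv) with the archimedean sign information extracted from (iii). I expect condition (ii) — uniqueness of $\fP$ and the explicit ramification index $e$ — to enter here, allowing a sufficiently precise analysis of the structure of $(\cO_{K^+}/\fP^{4e+1})^* / \mathrm{squares}$ to rule out any sign unit having the same image as $\pi_r$ modulo squares.
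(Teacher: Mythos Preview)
Your decomposition $\alpha = u\,\pi_r^k$ and the two-step plan (parity of $k$, then squareness of $u$) match the paper's skeleton. The genuine gap is exactly the step you flag as the ``main obstacle'': there is no mechanism to extract archimedean sign information about $u$ from a congruence at $\fP$, so the route through ``totally positive $\Rightarrow$ square'' never gets started. Concretely, in your $k$-odd case you would need $\epsilon\,\pi_r$ to be a non-square modulo $\fP^{4e+1}$ for \emph{every} sign-representative $\epsilon$, whereas condition (iv) only handles $\epsilon$ in the trivial square class; and in your $k$-even case the remark that $\pi_r$ is totally negative is idle, since $\pi_r$ enters only through the totally positive quantity $\pi_r^{-2m}$. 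Nothing in (ii) about the structure of $(\cO_{K^+}/\fP^{4e+1})^*/\text{squares}$ will supply the missing link, because the reduction map at $\fP$ and the sign map at the real places are a priori unrelated.

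The paper bypasses this entirely with one explicit identity you are missing:
\[
\bigl(\z^{(r-1)/2} + \z^{-(r-1)/2}\bigr)^2 \;=\; \pi_r + 4,
\]
so $\pi_r \equiv \alpha^2 \pmod{\fP^{2e}}$ for an explicit $\alpha \in \cO_{K^+}^*$. Writing $\mu = \epsilon\,\pi_r^t$ and using $\mu \equiv 1 \pmod{\fP^{2e}}$, this gives $\epsilon \equiv \beta^2 \pmod{\fP^{2e}}$ with $\beta := \alpha^{-t}$, \emph{regardless of the parity of $t$}. Then $\delta := (\beta + \sqrt{\epsilon})/2$ has minimal polynomial $X^2 - \beta X + (\beta^2-\epsilon)/4 \in \cO_{K^+}[X]$ of discriminant $\epsilon \in \cO_{K^+}^*$, so $K^+(\sqrt{\epsilon})/K^+$ is unramified at every finite place; assumption (iii) on the \emph{narrow} class number then forces $\epsilon$ to be a global square directly, with no sign bookkeeping. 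Only after $\epsilon = \gamma^2$ is established does the parity step come: if $t$ were odd, then $\mu \equiv 1 \pmod{\fP^{4e+1}}$ would give $\pi_r \equiv \nu^2 \pmod{\fP^{4e+1}}$, contradicting (iv). Thus the paper reverses your order (unit part first, parity second), and the decisive missing ingredient is the identity above.
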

\begin{proof}
	Let $\mu \in\Ker({\varphi})$, so $\mu \equiv 1 \mod \mathfrak{P}^{4e+1}$. We want to show that $\mu$ is a square. Consider  $\pi_r$ as described in \eqref{pir}, a uniformizer for the unique prime above $r$ in $K^+$. Thus, $\mu \in\mathcal{O}_{S_{K^+,r}}^*$, implies that $\mu = \epsilon \pi_r^t$, where $\epsilon \in \mathcal{O}_{K^+}^*$. We first show that $\epsilon$ must be a square. Note that 
	\[ (\zeta_r^{\frac{r-1}{2}} + \zeta_r^{-\frac{r-1}{2}})^2 = \pi_r + 4
	.\]
	Thus, $\pi_r \equiv \alpha^2 \mod 4$ where we denoted $\alpha:=(\zeta_r^{\frac{r-1}{2}} + \zeta_r^{-\frac{r-1}{2}})$. 
	As $\fP$ is the unique prime above $2$, we get that 
	$ (4)\cO_{K^+}=\fP^{2e}$ and hence
	\[
	\pi_r \equiv \alpha^2 \mod \fP^{2e}
	.\]
	This and $\mu \equiv 1 \mod \mathfrak{P}^{4e+1}$ imply that $\epsilon \equiv \beta^2 \mod \fP^{2e}$, where $\beta:=\alpha^{-t}$. Suppose that $\epsilon$ is not a square. Consider the field extension $L=K^+(\sqrt{\epsilon})$. We will show that $L$ is unramified at all finite places hence contradicting $2 \nmid h_{K^+}^+$.
	Consider the element $\delta:=\frac{\beta+\sqrt{\epsilon}}{2}$. Its minimal polynomial is
	$$ m_{\delta}(X) = X^2-\beta X+ \frac{\beta^2-\epsilon}{4}.
	$$
	This belongs to $\mathcal{O}_{K^+}[X]$ and has discriminant $\Delta= \epsilon \in \mathcal{O}_{K^+}^*$, proving that $L$ is unramified at all the finite places, contradicting $2 \nmid h_{K^+}^+$. Thus, we must have $\epsilon:= \gamma^2$ for some $\gamma \in \mathcal{O}_{K^+}^*$. Putting everything together
	$$ \mu = \epsilon \pi_r^t = \gamma^2 \pi_r^t.
	 $$
	In order to show that $\mu$ is a square, it is enough to show that $t$ is even. So, let's suppose that $t$ is odd, i.e. $t = 2k+1$ for some integer $k$. Hence $\mu = (\gamma \pi_r^{k})^2 \pi_r$. As we assumed 
	$\mu \equiv 1 \mod \fP^{4e+1}$, it must be that case that 
	\[ \pi_r \equiv \nu^2 \mod \fP^{4e+1}
	\]
	for some $\nu \in \mathcal{O}_{K^+}$, contradicting assumption (iv). In conclusion $t$ must be even and so $\mu \in \mathcal({O}_{S_{K^+,r}}^*)^2$
	
\end{proof}
	\begin{proof}[Proof of Theorem \ref{mainQ}]
		This is an application of Theorem \ref{main}.
	We first note that by using the notation at the beginning of this section we can consider $\fP_r:=(\pi_r)\cO_{K^+}$ to be the unique prime above $r$ in $K^+$ and $\fP$ to be the unique prime above $2$ in $K^+$ where we denote by $e:=e(\fP/2)$ the ramification index of $\fP$. Hence $S_{K^+,r}=\{\fP_r\}$ and $S_{K^+,2}=\{\fP\}$ giving $S_{K^+}=\{\fP,\fP_r\}$.
		
		We prove that the hypothesis of Theorem \ref{main} holds. More precisely, we want to show that every solution $(\lambda, \mu) \in \mathcal{O}_{S_K^+}^* \times \mathcal{O}_{S_K^+}^*$ to the equation
		\begin{equation}\label{sunit}
			\lambda+\mu = 1
		\end{equation} 
		satisfies $\max(|v_{\p}(\lambda)|,|v_{\p}(\mu)|)\leq 4e$.
		
		Suppose by a contradiction we have an $S_{K^+}$-unit  solution $(\lambda, \mu)$ with $|v_{\p}(\lambda)|>4e$. Without loss of generality, we may assume $v_{\p}(\lambda)>4e$. Otherwise, one can consider $(\frac{1}{\lambda}, -\frac{\mu}{\lambda})$ instead, which is also a solution to the equation. By the properties of non-archimedean valuations applied to \eqref{sunit} it follows that $v_{\p}(\mu)=0$. Thus, we deduce that $\mu \equiv 1 \text{ mod } \fP^{4e+1}$. Hence $\mu$ lies in the kernel of the natural homomorphism
		\[ \varphi \colon \mathcal{O}_{S_{K^+,r}}^* \to (\mathcal{O}_{K^+}/\fP^{4e+1})^*.\]
		By Lemma \ref{square}, $\Ker{\varphi} \subseteq \mathcal({O}_{S_{K^+,r}}^*)^2$. 
		
		Thus, for each solution of the $S_{K^+}$-unit equation \eqref{sunit} with
		\begin{equation} \label{val}
			v_{\p}(\lambda)>4e, \quad v_{\p}(\mu)=0
		\end{equation}
	 we get that $\mu = \tau^2$ with $\tau \in \mathcal{O}^*_{S_{K^+},r}$. As there are only finitely many solutions to the $S$-unit equation, we may suppose that $(\lambda, \mu)$ satisfies \eqref{val} with the value of $v_{\p}(\lambda)$ as large as possible.
		
		We can rewrite \eqref{sunit} as 
		\begin{equation}
			\lambda = (1-\tau)(1+\tau)
		\end{equation}
		Denote by $\lambda_1:= 1-\tau$ and $\lambda_2:=1+\tau$ and by $t_i:=v_{\p}(\lambda_i)$ for $i=1,2$. By assumption $t:=v_{\p}(\lambda)>4e$, giving $t=t_1+t_2>4e$. By noting that $$\lambda_1+\lambda_2 = 2$$
		we can only have that $t_1=e$ or $t_2=e$. By changing the sign of $\tau$ if necessary, we may assume that $t_2=e$ and hence $t_1=t-e$. Now, note that by rearranging we get the following $S_{K^+}$-unit relation:
		$$ \frac{\lambda_2^2}{4\tau}+\frac{-\lambda_1^2}{4\tau}=1
		$$
		We compute $v_{\p}(\frac{\lambda_2^2}{4\tau})=2t_2-2e=0$ and $v_{\p}(\frac{-\lambda_1^2}{4\tau})=2t_1-2e=2t-4e=t+(t-4e)>t$ as $t>4e$ by assumption. So we found a new solution $(\lambda',\mu')=(\frac{-\lambda_1^2}{4\tau},\frac{\lambda_2^2}{4\tau})$ to \eqref{sunit} with valuations $v_{\p}(\lambda')> t,v_{\p}(\mu)=0$. This contradicts the maximality of $v_{\p}(\lambda)$ and completes the proof.
	
	\end{proof} 
\section{Proof of Corollaries \ref{mainQ} and \ref{mainQd}}
\begin{lemma}\label{sqnorm}
Let $\fP$ be the only prime above $2$ in $K^+$ and $\lambda \in \cO_{K^+}$.
Suppose that $\lambda \equiv \nu^2 \mod \fP^n$ where $\nu \in \cO_{K^+}$. Then
$$ \text{Norm}_{K^+/K}(\lambda) \equiv v^2 \mod \mathfrak{Q}^{\lceil n/e' \rceil}
$$
where $\mathfrak{Q}$ is the unique prime above $2$ in $K$ and $e'$ is the ramification index $e':=e(\fP/\mathfrak{Q})$ and $v\in \cO_{K}.$
\end{lemma}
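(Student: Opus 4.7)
The plan is to lift the congruence $\lambda \equiv \nu^2 \pmod{\fP^n}$ to a congruence of norms, and then to descend from a $\fP$-adic congruence in $\cO_{K^+}$ to a $\mathfrak{Q}$-adic congruence in $\cO_K$. The crucial observation is that $\fP$ is stable under $\text{Gal}(K^+/K)$: since $\mathbb{Q}(\z+\z^{-1})/\mathbb{Q}$ is Galois, the compositum $K^+ = K(\z+\z^{-1})$ is Galois over $K$, and because $\fP$ is the unique prime of $K^+$ above $\mathfrak{Q}$, every $\sigma \in \text{Gal}(K^+/K)$ must fix $\fP$.

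First, I would apply each $\sigma \in \text{Gal}(K^+/K)$ to the hypothesis; using $\sigma(\fP) = \fP$ this gives $\sigma(\lambda) \equiv \sigma(\nu)^2 \pmod{\fP^n}$. Multiplying these congruences over all $\sigma$ yields
\[
\text{Norm}_{K^+/K}(\lambda) \equiv \text{Norm}_{K^+/K}(\nu)^2 \pmod{\fP^n},
\]
so setting $v := \text{Norm}_{K^+/K}(\nu) \in \cO_K$ produces $\text{Norm}_{K^+/K}(\lambda) - v^2 \in \fP^n \cap \cO_K$.

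Second, I would descend to a congruence modulo a power of $\mathfrak{Q}$. Since $\mathfrak{Q}\cO_{K^+} = \fP^{e'}$, for any $\alpha \in K$ one has $v_{\fP}(\alpha) = e' \cdot v_{\mathfrak{Q}}(\alpha)$. Hence if $\alpha \in \cO_K$ satisfies $v_{\fP}(\alpha) \geq n$, then $v_{\mathfrak{Q}}(\alpha) \geq n/e'$, and integrality of $v_{\mathfrak{Q}}(\alpha)$ forces $v_{\mathfrak{Q}}(\alpha) \geq \lceil n/e' \rceil$. Applying this to $\alpha := \text{Norm}_{K^+/K}(\lambda) - v^2$ delivers the claim.

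The proof is essentially elementary with no genuine obstacle; the only conceptual step is recognising that the uniqueness of $\fP$ above $\mathfrak{Q}$, combined with $K^+/K$ being Galois, forces $\fP$ to be Galois-stable, which is precisely what allows the norm to inherit the congruence. The descent step is then a clean computation with ramification.
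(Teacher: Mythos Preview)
Your proof is correct and follows exactly the same route as the paper: take norms of the congruence to obtain $\text{Norm}_{K^+/K}(\lambda)\equiv \text{Norm}_{K^+/K}(\nu)^2 \pmod{\fP^n}$, then descend to $\mathfrak{Q}^{\lceil n/e'\rceil}$ using that the difference lies in $\cO_K$. The paper compresses both steps into two sentences, whereas you spell out why $\fP$ is Galois-stable and why $\fP^n\cap \cO_K\subseteq \mathfrak{Q}^{\lceil n/e'\rceil}$; these are exactly the justifications the paper leaves implicit.
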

\begin{proof}
Taking norms of $\lambda \equiv \nu^2 \mod \fP^n$
we get that 
$$\N_{K^+/K}(\lambda) \equiv \N_{K^+/K}(\nu)^2\mod \fP^n.$$
As $\lambda, \nu \in \cO_{K^+}$, it follows that $\N_{K^+/K}(\lambda),\N_{K^+/K}(\nu) \in \cO_K$ and so
$\N_{K^+/K}(\lambda) \equiv v^2\mod \fQ^{\lceil n/e' \rceil}$
where $v:=\N_{K^+/K}(\nu) \in \cO_{K}.$
\end{proof}

Now, we fix a prime $r\geq5$ and $K$ a totally real number field. We let $K^+:=K(\z+\z^{-1})$. Then, as in the statement of Theorem \ref{main2} we assume $r$ is inert in $K$, and $\fP$ is the unique prime above $2$ in $K^+$ with ramification index $e:=e(\fP/2)$ and we denote by $\pi_r:=\z+\z^{-1}-2$.

\begin{proof}[Proof of Corollary \ref{mainQ}]
Using the above notation with $K:=\mathbb{Q}$.
We will show that $r \not\equiv 1 \mod 8$ implies that $\pi_r \not\equiv \nu^2 \mod  \fP^{4e+1}$  with $\nu \in \cO_{\mathbb{Q}^+}$ and then, the result follows from Theorem \ref{main2}.

Suppose by a contradiction $\pi_r \equiv \nu^2 \mod \fP^{4e+1}$. By 
Lemma \ref{sqnorm} with $K=\mathbb{Q}$, $n=4e+1$ and $e'=e$ we get that  $r \equiv v^2 \mod 2^5$.
However the odd squares modulo $32$ are $\{1,9,17,25\}$ contradicting $r \not\equiv 1 \mod 8$.
\end{proof}
	
\begin{proof}[Proof of Corollary \ref{mainQd}]
Using the above notation with $K:=\mathbb{Q}(\sqrt{d})$, we will show that the conditions on (i) and (ii) imply that $\pi_r \not\equiv \nu^2 \mod  \fP^{4e+1}$  with $\nu \in \cO_{K^+}$ and then the result follows from Theorem \ref{main2}.
 Note that as $r\nmid d $ it follows that $r$ is inert in $K$. Note that as there is a unique prime above $2$ in $K$ we have the following two cases.
 
\textbf{Case 1:} Suppose that $d \equiv 5 \mod 8$ and so $\cO_{K}=\mathbb{Z}[{\frac{\sqrt{d}+1}{2}}]$. In this case $2$ is inert in $K$ giving $e=e'=e(\fP/2)$. Suppose by a contradiction that $\pi_r \equiv \nu^2 \mod  \fP^{4e+1}$
with $\nu \in \cO_K$. By Lemma \ref{sqnorm} it follows that $r\equiv v^2 \mod 2^5$ for $v=a+b\frac{1+\sqrt{d}}{2}\in \cO_K$. Hence
\[
r \equiv \left(a+b\frac{1+\sqrt{d}}{2}\right)^2\equiv \left(a^2+b^2\frac{d-1}{4}\right)+\left(b^2+2ab\right)\frac{1+\sqrt{d}}{2} \mod 32.
\]
As $\cO_K/32\cO_K\cong (\mathbb{Z}/32\mathbb{Z})[\frac{1+\sqrt{d}}{2}]$ it follows that 
\begin{equation*}
        \begin{cases}
        (b^2+2ab)\equiv 0 \mod 32\\
        \left(a^2+b^2\frac{d-1}{4}\right) \equiv r \mod 32
        \end{cases}
    \end{equation*}
The first equation implies that $b$ is even. If $a$ would be even too, it would imply that $r$ is even which is a contradiction. Note that this implies that $a^2 \equiv 1 \mod 8$ and $b^2 \equiv 0, 4 \mod 8 $. Thus, using the second equation we get either $r \equiv 1 \mod 8$ or $r \equiv d \mod 8$, contradicting the hypothesis.

\textbf{Case 2:} Suppose that $d\equiv 2,3 \mod 4$ and so $\cO_K=\mathbb{Z}[\sqrt{d}]$. In this case, $2$ is totally ramified in $K$ and we denote by $\fQ$ the unique prime above $2$ in $K$.
Hence $\fQ^2=(2)\cO_K$ and $e:=e(\fP/2)=2e(\fP/\fQ)=2e'$.
Suppose by a contradiction that $\pi_r \equiv \nu^2 \mod  \fP^{4e+1}$ with $\nu \in \cO_K$. By Lemma \ref{sqnorm} it follows (in particular) that $r\equiv v^2 \mod \fQ^{4e/e'}$, giving $r\equiv v^2 \mod 16$
for $v=a+b\sqrt{d}\in \cO_K$. It follows that 
\[r \equiv \left(a+b\sqrt{d}\right)^2\equiv \left(a^2+b^2d\right)+2ab\sqrt{d} \mod 16. 
\]
As $\cO_K/16\cO_K\cong (\mathbb{Z}/16\mathbb{Z})[\sqrt{d}]$ it follows that 
\begin{equation*}
        \begin{cases}
        2ab\equiv 0 \mod 16\\
        \left(a^2+b^2d\right) \equiv r \mod 16
        \end{cases}
    \end{equation*}
The first equation implies that $8|ab$. As before, $a$ and $b$ cannot be simultaneously even, so we have two cases.
\begin{itemize}
    \item[\textbf{Case 1.}] If $8|b$ (and $a$ is odd) then the second equation gives $a^2 \equiv r \mod 16$ contradicting the assumption that $r \not\equiv 1 \mod 8$.
    \item[\textbf{Case 2.}] If $8|a$ (and $b$ is odd) then the second equation gives $b^2d \equiv r \mod 16$. In particular, $b^2d \equiv r \mod 8$ and the only odd square modulo  $8$ is $1$, thus it implies $d \equiv r \mod 8$ contradicting the assumption that
    $r \not\equiv d \mod 8$.
\end{itemize}
Hence, both (i) and (ii) imply that $\pi_r \not\equiv \nu^2 \mod  \fP^{4e+1}$  with $\nu \in \cO_{K^+}$, so we can conclude the proof by Theorem \ref{main2}.
\end{proof}
	\newpage
	\printbibliography
\end{document}